\patchcmd{\subequations}{\alph{equation}}{\alphalph{\value{equation}}}{}{}
\def\munderbar#1{\underline{\sbox\tw@{$#1$}\dp\tw@\z@\box\tw@}}
\newcommand{\bea}{\begin{equation*}\begin{aligned}}
\newcommand{\eea}{\end{aligned}\end{equation*}}
\begin{document}
	
	
	\RUNTITLE{A novel exact approach to polynomial optimization}
	\TITLE{
		A novel exact approach to polynomial optimization
	}
	
	\ARTICLEAUTHORS{
 		\AUTHOR{Dimitris Bertsimas}
		\AFF{Operations Research Center, Massachusetts Institute of Technology, United States,
  \EMAIL{d.bertsim@mit.edu}}
		\AUTHOR{Dick den Hertog} 
		\AFF{Amsterdam Business School, University of Amsterdam, The Netherlands,
			\EMAIL{d.denhertog@uva.nl}}
    		\AUTHOR{Thodoris Koukouvinos}
		\AFF{Operations Research Center, Massachusetts Institute of Technology, United States,
  \EMAIL{tkoukouv@mit.edu}}
 	}
	\ABSTRACT{Polynomial optimization problems represent a wide class of optimization problems, with a large number of real-world applications. Current approaches for polynomial optimization, such as the sum of squares (SOS) method, rely on large-scale semidefinite programs, and therefore the scale of problems to which they can be applied is limited and an optimality guarantee is not always provided. {\color{black} Moreover, the problem can have other convex nonlinear parts, that cannot be handled by these approaches.} In this paper, we propose an alternative approach for polynomial optimization. We obtain a convex relaxation of the original polynomial optimization problem, by deriving a sum of linear times convex (SLC) functions decomposition for the polynomial. We prove that such SLC decompositions always exist for arbitrary degree polynomials. {\color{black} Moreover, we derive the SLC decomposition that results in the tightest lower bound, thus improving significantly the quality of the obtained bounds in each node of the spatial Branch and Bound method.} In the numerical experiments, we show that our approach outperforms state-of-the-art methods for polynomial optimization, such as BARON and SOS. We show that with our method, we can solve polynomial optimization problems {\color{black} to optimality} with 40 variables and degree 3, as well as 20 variables and degree 4, in less than an hour.}
		
	%
	
	
	\KEYWORDS{Reformulation-perspectification techniques, perspective functions, robust nonlinear optimization, adaptive robust optimization.}
	
	
	%
	
	\maketitle

\section{Introduction}  
In this paper, we consider the following optimization problem 
\begin{equation}
    \begin{array}{cll}
        \displaystyle \min_{\boldsymbol{x}} &\quad p_d(\boldsymbol{x}) \\
        {\rm s.t.} 	&\quad  \boldsymbol{f} \left( \boldsymbol{x} \right) \le \boldsymbol{0}, \\ 
        &\quad \boldsymbol{x} \in \mathcal{X},
    \end{array} \label{eq:genericproblem}
\end{equation} 
where $p_d(\boldsymbol{x})$ represents a multivariate polynomial of degree $d$. {\color{black} We have $\boldsymbol{f} \left( \boldsymbol{x} \right) = [f_1(\boldsymbol{x}) \ \cdots \ f_K (\boldsymbol{x})]^T$, and $f_k: \mathbb{R}^{n} \rightarrow (-\infty, + \infty ]$ are polynomials, for every $k \in \{1,\ldots,K\}.$. Moreover, the set $\mathcal{X} \subseteq \mathbb{R}^{n_x}$ is defined by:
\begin{align*}
\mathcal{X} = \{ \boldsymbol{x} \in \mathbb{R}^{n_x} \mid \boldsymbol{A} \boldsymbol{x} \le \boldsymbol{b}, \: \: \boldsymbol{h} (\boldsymbol{x}) \le \boldsymbol{0} \},    
\end{align*}
where $\boldsymbol{A} \in \mathbb{R}^{k \times n_x}$, $\boldsymbol{b} \in \mathbb{R}^{k}$, $\boldsymbol{h} (\boldsymbol{x}) = [h_1(\boldsymbol{x})  \ \cdots \ h_J (\boldsymbol{x})]^T$, and $h_j: \mathbb{R}^{n_x} \rightarrow (-\infty, + \infty ]$ are proper, closed and convex for every $j \in \mathcal{J} = \{1,\ldots,J\}$.}

Problem \eqref{eq:genericproblem} is in general nonconvex and has many real-world applications, including high-order moment portfolio optimization problems \cite{niu2019high, pham2011efficient}, geometric programs \cite{boyd2007tutorial}, optimal power flow in electrical networks \cite{lavaei2011zero}, robot motion planning \cite{majumdar2014control}, and matrix completion problems \cite{bakonyi1995euclidian}.

Several methods have been proposed to solve polynomial optimization problems, those are specific versions of Problem \eqref{eq:genericproblem}, where $h_j(\boldsymbol{x}) = 0$, for all $j \in \mathcal{J}$. One state-of-the-art approach is the sum-of-squares (SOS) algorithm \cite{parrilo2003semidefinite}. The SOS method obtains a relaxation of the original polynomial optimization problem, by constructing a semidefinite program (SDP). Moreover, in order to converge to the global optimal solution, a hierarchy of relaxations, of increasing size, needs to be considered. As a result, SOS becomes intractable when the size of the polynomial optimization problem increases. Moreover, the SOS method may or may not provide an optimality guarantee, while it is designed to handle only linear or polynomial constraints. On the other hand, our method can handle general convex constraints and does give an optimality guarantee. Another approach is the Lasserre’s hierarchy method \cite{lasserre2001global, lasserre2008semidefinite, lasserre2009moments}, that constructs a hierarchy of SDP relaxations that converge to the global optimal solution. The main drawback of Lasserre’s hierarchy approach, is that it requires a high order SDP relaxation in order to obtain a good approximation and therefore it quickly becomes computationally intractable. The Reformulation-Linearization-Technique (RLT) has also been proposed for polynomial optimization problems \cite{sherali2013}. {\color{black} We note that both Lassere's hierarchy and RLT, can be viewed as special cases of the SOS method, and therefore they are outperformed by the latter, see \cite{parrilo2003semidefinite}}. Finally, we note that state-of-the-art global optimization solvers, such as BARON \cite{sahinidis1996baron}, are also good alternatives for polynomial optimization problems.

Recently, \cite{bertsimas2023novel} introduced a novel method to solve nonconvex optimization problems, in which the nonconvex functions can be written as the sum of linear times convex (SLC) functions. They introduced the Reformulation- Perspectification-Technique (RPT) in order to obtain a convex relaxation of the original nonconvex optimization problem, and then used branch and bound to obtain the global optimal solution. The main idea of the current paper is to derive SLC decompositions for polynomials, and then leverage the RPT approach in order to obtain a convex relaxation of Problem \eqref{eq:genericproblem}.

In this paper, we focus on deriving SLC decompositions for polynomials. We propose two types of SLC decompositions and prove the existence for both of them, for arbitrary degree polynomials. {\color{black} Moreover, we outline how to derive the "best" SLC decomposition out of the infinitely many possible ones, that is the decomposition that results in the tightest lower bound for Problem \eqref{eq:genericproblem}.} As we show in the numerical experiments, the "best" SLC decomposition provides very high quality bounds and as a result we can easily find the global optimal solution of Problem \eqref{eq:genericproblem}, without needing to do branch and bound in many cases.

\medskip\noindent 
{\bf Contributions.}
Our main contributions can be summarized as follows: 
\begin{enumerate} 
\item We prove the existence of two types of SLC decompositions for polynomials of arbitrary degree. In the first one, the linear terms are defined by the linear constraints of the feasible region, and the convex terms are polynomials of one degree less than the original one. {\color{black} Further, in the second one, the linear terms are defined as linearized products of $d-2$ linear constraints that define $\mathcal{X}$, and the convex terms are quadratics.} 
\item We derive the SLC decomposition that results in the tightest lower bound for Problem \eqref{eq:genericproblem}, out of the infinitely many possible decompositions. The resulting problem is an SDP, in which the size of the largest LMI is $\mathcal{O}(n^2)$, that is an improvement over SOS in which it is at least $\mathcal{O}(n^d)$, where $d$ is the degree of the polynomial. 
\item We conduct numerical experiments on polynomial optimization problems and demonstrate that our approach often outperforms BARON and SOS. We show that with our approach, we can solve polynomial optimization problems with 40 variables in the case of degree 3, and 20 variables in the case of degree 4, both in less than an hour. 
\end{enumerate}   

\vspace{0.3cm}
\noindent  {\color{black} This paper is structured as follows: In Section 2, we outline our method for polynomials of degree 3. In Section 3, we describe our method for polynomials of degree 4. In Section 4, we generalize our method to arbitrary degree polynomials. In Section 5, we provide insights on how our approach compares to current methods from the literature. In Section 6, we asses the numerical performance of the approach, and finally in Section 7 we summarize our findings.}

\medskip\noindent
{\bf Notation.} 
We use bold faced characters such as $ \boldsymbol{x} $ to represent vectors and bold faced capital letters such as $ \boldsymbol{X} $ to represent matrices. The calligraphic letters~$\mathcal{I}$, $\mathcal{J}$, $\mathcal{K}$, $\mathcal{L}$ and the corresponding capital Roman letters $I$, $J$, $K$, $L$ are reserved for finite index sets and their respective cardinalities, i.e., $\mathcal{I}= \{1,\dots, I\}$ etc. Moreover, the caligraphic letters $\mathcal{X}, \mathcal{Z},\mathcal{V}$ are used to denote feasible regions. We use the notation $p_d(\boldsymbol{x})$ for a multivariate polynomial of degree $d$.

\section{Polynomials of degree 3}  
We consider a generic polynomial of degree 3, that is,  
\begin{align*}
    p_3(\boldsymbol{x}) = \sum_{i \le j \le k} c_{ijk}^3 x_i x_j x_k + \boldsymbol{x}^T \boldsymbol{c}^2 \boldsymbol{x} + \boldsymbol{x}^T \boldsymbol{c}^1 + c^0,
\end{align*}    
where $\boldsymbol{c}^3 \in \mathbb{R}^{n \times n \times n}, \boldsymbol{c}^2 \in \mathbb{R}^{n \times n}, \boldsymbol{c}^1 \in \mathbb{R}^n, c^0 \in \mathbb{R}$. We next discuss how to derive valid SLC decompositions for $p_3$, where the linear terms are defined by linear constraints of the feasible region, and the convex terms are polynomials of degree 2. {\color{black} In the following, we assume that the feasible region is contained in the interval $[0,1]^n$. Boundedness of the feasible region is already sufficient to calculate a box $[l,u]^n$ that contains the feasible region, and then this can simply be transformed to the unit box $[0,1]^n$, see \cite{burer2009nonconvex}.} 

\subsection{Existence of SLC decomposition}
We assume that $\mathcal{X} \subseteq [0,1]^n$. In this case, we will show that we can write $p_3(\boldsymbol{x})$ as the sum of linear functions, defined by $x_i, 1-x_i, 1, \: i \in [n]$, multiplied by convex polynomials of degree 2, that is,
\begin{equation}
    p_3(\boldsymbol{x}) = \sum_{i=1}^n x_ip_2^i(\boldsymbol{x}) + \sum_{i=1}^n (1-x_i)q_2^i(\boldsymbol{x}) + \beta_2(\boldsymbol{x}), \label{eq:poly3_decomp_gen}
\end{equation}
where  
\begin{align*}
    & p_2^i(\boldsymbol{x}) = \boldsymbol{x}^T \boldsymbol{P}^i \boldsymbol{x} + \boldsymbol{x}^T \boldsymbol{r}^i + w^i, \\ 
    & q_2^i(\boldsymbol{x}) = \boldsymbol{x}^T \boldsymbol{Q}^i \boldsymbol{x} + \boldsymbol{x}^T \boldsymbol{f}^i + g^i, \\ 
    & \beta_2(\boldsymbol{x}) = \boldsymbol{x}^T \boldsymbol{P}' \boldsymbol{x} + \boldsymbol{x}^T \boldsymbol{r}' + w',
\end{align*}  
and all polynomials $p_2^i(\boldsymbol{x}), q_2^i(\boldsymbol{x}), \beta_2(\boldsymbol{x})$ are convex. We have the following result.
\begin{theorem}
\label{thm:exist_degree3}
Every polynomial of degree 3, denoted as $p_3(\boldsymbol{x})$, can be written as 
   \begin{align*}
    p_3(\boldsymbol{x}) = \sum_{i=1}^n x_ip_2^i(\boldsymbol{x}) + \sum_{i=1}^n (1-x_i)q_2^i(\boldsymbol{x}) + 1 \beta_2(\boldsymbol{x}),
\end{align*}  
where $p_2^i(\boldsymbol{x}),q_2^i(\boldsymbol{x}),\beta_2(\boldsymbol{x})$ are convex polynomials of degree 2, for all $i \in [n]$. 
\end{theorem}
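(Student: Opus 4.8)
The plan is to match the proposed decomposition to $p_3$ degree by degree, exploiting the large amount of freedom in the coefficients $\boldsymbol{P}^i,\boldsymbol{Q}^i,\boldsymbol{r}^i,\boldsymbol{f}^i,w^i,g^i,\boldsymbol{P}',\boldsymbol{r}',w'$, and to meet the convexity (positive semidefiniteness) requirements by a spectral shift that is invisible to the cubic part. First I would isolate the cubic form $C(\boldsymbol{x})=\sum_{i\le j\le k}c^3_{ijk}x_ix_jx_k$ and observe that any cubic form can be written as $C(\boldsymbol{x})=\sum_{i=1}^n x_i\,\boldsymbol{x}^T\boldsymbol{M}^i\boldsymbol{x}$ for suitable symmetric matrices $\boldsymbol{M}^i$. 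Euler's identity for homogeneous functions gives the explicit choice $\boldsymbol{x}^T\boldsymbol{M}^i\boldsymbol{x}=\tfrac13\,\partial_{x_i}C(\boldsymbol{x})$, although any valid assignment of the monomials across the indices would do equally well.

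Expanding the decomposition and collecting terms by total degree, the cubic part equals $\sum_i x_i\,\boldsymbol{x}^T(\boldsymbol{P}^i-\boldsymbol{Q}^i)\boldsymbol{x}$, because $(1-x_i)q_2^i$ contributes the cubic term $-x_i\boldsymbol{x}^T\boldsymbol{Q}^i\boldsymbol{x}$. Thus the cubic part is governed only by the differences $\boldsymbol{P}^i-\boldsymbol{Q}^i$, and I would impose $\boldsymbol{P}^i-\boldsymbol{Q}^i=\boldsymbol{M}^i$. This is the crucial observation: I may now pick a common shift $t>0$ with $\boldsymbol{M}^i+t\boldsymbol{I}\succeq\boldsymbol{0}$ for every $i$ (any $t\ge\max_i\|\boldsymbol{M}^i\|$ works), and set $\boldsymbol{P}^i=\boldsymbol{M}^i+t\boldsymbol{I}$ and $\boldsymbol{Q}^i=t\boldsymbol{I}$. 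Both are positive semidefinite, so $p_2^i$ and $q_2^i$ are convex, while the extra cubic contributions $t\,x_i\|\boldsymbol{x}\|^2$ from $x_ip_2^i$ and $-t\,x_i\|\boldsymbol{x}\|^2$ from $(1-x_i)q_2^i$ cancel exactly, leaving the cubic part equal to $C(\boldsymbol{x})$.

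It then remains to match the quadratic, linear and constant parts, where the convexity constraints are no longer binding. The quadratic terms of the decomposition amount to $\sum_{i,j}(r^i_j-f^i_j)x_ix_j+nt\|\boldsymbol{x}\|^2+\boldsymbol{x}^T\boldsymbol{P}'\boldsymbol{x}$; since the bilinear coefficients $r^i_j-f^i_j$ can realize an arbitrary quadratic form, I would take $\boldsymbol{P}'=\boldsymbol{0}$ (so that $\beta_2$ is affine and hence trivially convex) and choose the $\boldsymbol{r}^i$ to reproduce $\boldsymbol{x}^T\boldsymbol{c}^2\boldsymbol{x}-nt\|\boldsymbol{x}\|^2$. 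The linear and constant parts are then matched by the still-unused scalars and vectors $w^i,g^i,\boldsymbol{r}',w'$, for instance by setting $\boldsymbol{r}'=\boldsymbol{c}^1$ and $w'=c^0$ with the remaining parameters zero.

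The only genuine obstacle is the simultaneous positive semidefiniteness of all the quadratic blocks; everything else is linear bookkeeping with ample degrees of freedom. This obstacle is exactly what the presence of both the $x_i$ and the $1-x_i$ multipliers resolves: using the $x_i$ multipliers alone one could not inflate the Hessians without corrupting the cubic form, whereas the paired shift holds the cubic part fixed while rendering the curvature as large and positive as required.
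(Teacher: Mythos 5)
Your construction is correct and is essentially the paper's own argument: both proofs split the cubic form as $\sum_i x_i\,\boldsymbol{x}^T\boldsymbol{M}^i\boldsymbol{x}$, add a uniform positive multiple of $\|\boldsymbol{x}\|^2$ to every quadratic so that the Hessians become positive semidefinite, let the matching $\|\boldsymbol{x}\|^2$ term in $q_2^i$ cancel the induced cubic excess, and absorb the leftover quadratic excess through linear terms inside the $p_2^i$. The only differences are cosmetic — you use Euler's identity and a spectral-norm shift where the paper assigns coefficients directly and invokes Gershgorin diagonal dominance, and you house $\boldsymbol{c}^2$ in the $\boldsymbol{r}^i$'s rather than in $\beta_2$.
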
  

\begin{proof}{Proof. }
{\color{black} We define the matrices $\boldsymbol{F}^i$ such that  $F_{jk}^i = c_{ijk}^3$, for all $i \le j \le k \in [n]$. Moreover, we take $\boldsymbol{B} = \boldsymbol{c}^2, \: \boldsymbol{b} = \boldsymbol{c}^1, \: \eta = c^0$. We consider the degree 2 polynomials 
\begin{align*}
    & p_2^i(\boldsymbol{x}) = \boldsymbol{x}^T \boldsymbol{F}^i \boldsymbol{x} + \alpha \|\boldsymbol{x}\|^2 - \alpha (n+1) x_i, \\ 
    & q_2^i(\boldsymbol{x}) = \alpha \|\boldsymbol{x}\|^2, \\ 
    & \beta_2(\boldsymbol{x}) = \boldsymbol{x}^T \boldsymbol{B} \boldsymbol{x} + \boldsymbol{x}^T \boldsymbol{b} + \eta + \alpha \|\boldsymbol{x}\|^2,
\end{align*}   
where $\alpha_p^i = \max_k \left\{ \sum_{j \neq k} |F_{kj}^i| - F_{kk}^i \right\}, \: \alpha_{\beta} = \max_k \left\{ \sum_{j \neq k} |B_{kj}| - B_{kk} \right\}$ and $\alpha = \max \left(0, \: \max_i \alpha_p^i, \: \alpha_{\beta} \right)$. By definition, the Hessian matrices of the quadratics are diagonally dominant, and therefore from the Gershgorin Theorem \cite{gershgorin1931uber} it follows that they are positive semi-definite. Therefore, the polynomials $p_2^i, q_2^i, \beta_2$ are convex.} Further, they satisfy
\begin{align*}
    \sum_{i=1}^n x_ip_2^i(\boldsymbol{x}) + \sum_{i=1}^n(1-x_i)q_2^i(\boldsymbol{x}) + \beta_2(\boldsymbol{x})  
    \: \: = \: \: p_3(\boldsymbol{x}).
\end{align*}  \hfill $\square$
\end{proof}     

\begin{remark}
We note that the result of Theorem \ref{thm:exist_degree3} also holds, if we only consider the terms in the objective, defined by $x_i, \: (1-x_i),\: 1, \: i \in [n]$. However, by considering the additional term $1 \beta_2(\boldsymbol{x})$, we obtain a stronger approximation.
\end{remark}

So far we have shown that an SLC decomposition for $p_3(\boldsymbol{x})$ always exists. We next provide an illustrative example on how to construct one, for the case of a polynomial of degree 3 in $\mathbb{R}^2$. 

\begin{example}
Consider the generic polynomial 
\begin{align*}
    p_3(\boldsymbol{x}) = a_1 x_1^3 + a_2 x_2^3 + a_3 x_1^2 x_2 + a_4 x_1 x_2^2 + a_5 x_1^2 + a_6 x_2^2 + a_7 x_1 x_2 + a_8 x_1 + a_9 x_2,
\end{align*}  
which covers all possible polynomials of degree 3, in $\mathbb{R}^2$. We initialize $\boldsymbol{P}^1 = \boldsymbol{r}^1 = w^1 = \boldsymbol{P}^2 = \boldsymbol{r}^2 = w^2 = \boldsymbol{Q}^1 = \boldsymbol{f}^1 = g^1 = \boldsymbol{Q}^2 = \boldsymbol{f}^2 = g^2 = \boldsymbol{P}' = \boldsymbol{r}' = w' = \boldsymbol{0}$.
We take 
\begin{align*}
    & P_{11}^1 = a_1, \: P_{12}^1 = P_{21}^1 = a_3/2, \: P_{22}^1 = a_4, \: P_{22}^2 = a_2, \\ 
    & r_{1}^1 = a_5, \: r_{2}^1 = a_7, \: r_{2}^2 = a_6, \: w^1 = a_8, \: w^2 = a_9.
\end{align*}   
As a result, we obtain
\begin{align*}
  & p_2^1(\boldsymbol{x}) = a_1 x_1^2 + a_3 x_1 x_2 + a_4 x_2^2 + a_5 x_1 + a_7 x_2 + a_8, \\ 
  & p_2^2(\boldsymbol{x}) = a_2 x_2^2 + a_6 x_2 + a_9, \\ 
  & x_1 p_2^1(\boldsymbol{x}) + x_2 p_2^2(\boldsymbol{x}) = p_3(\boldsymbol{x}).  
\end{align*}  
Then, we can make the polynomials of degree 2 convex, as illustrated in the proof of Theorem \ref{thm:exist_degree3}. \hfill $\square$ 
\end{example} 

Observe that the SLC decomposition for a polynomial of degree 3 is not unique and in fact there are infinitely many choices for the coefficients of the convex quadratics that define valid SLC decompositions. The question that arises in this case is how we can obtain the "best" SLC decomposition out of the infinitely many possible ones, that is, the decomposition that results in the tightest lower bound for Problem \eqref{eq:genericproblem}.

\subsection{Deriving the best SLC decomposition}
Observe that all valid SLC decompositions of the proposed form are parametrized by coefficients, denoted as 
$\boldsymbol{Z}$, that satisfy certain equality constraints, those of matching the coefficients of $p_3(\boldsymbol{x})$. Further, in order to ensure convexity for the polynomials of degree 2, we impose the LMIs: $ \boldsymbol{P}^i \succeq \boldsymbol{0}, \ \boldsymbol{Q}^i \succeq \boldsymbol{0}, \boldsymbol{P}' \succeq \boldsymbol{0}, \: \: i \in [n].$  Let $\mathcal{Z}$ denote the set of possible realizations for $\boldsymbol{Z}$, that can be described as follows
\begin{equation*}
    \mathcal{Z} = \left\{ \begin{array}{c} 
    \boldsymbol{P}^i, \boldsymbol{Q}^i, \boldsymbol{P}' \in \mathbb{R}^{n \times n}, \\
    \boldsymbol{r}^i, \boldsymbol{f}^i, \boldsymbol{r}' \in \mathbb{R}^n, \\
   w^i, g^i,  w' \in \mathbb{R}, \end{array} \enskip : \enskip  \begin{array}{lr}
	 \sum_{i=1}^n \left \langle \boldsymbol{A}_j^i, \boldsymbol{P}^i \right \rangle + \left \langle \boldsymbol{B}_j^i, \boldsymbol{Q}^i \right \rangle + \left( \boldsymbol{c}_j^i \right)^T \boldsymbol{r}^i \\ 
     \quad \quad + \left( \boldsymbol{d}_j^i \right)^T \boldsymbol{f}^i + \mu_j^i w^i + \nu_j^i g^i \\ 
     + \left \langle \boldsymbol{\Xi}_j, \boldsymbol{P}' \right \rangle + \boldsymbol{\omega}_j^T \boldsymbol{r}' + \gamma_j w' = s_j, \quad j \in [m], \\
    \boldsymbol{P}^i, \boldsymbol{Q}^i, \boldsymbol{P}' \succeq \boldsymbol{0}, \ i \in [n].
	 \end{array}  \right\}
\end{equation*} 

{\color{black} In the definition of the set $\mathcal{Z}$, we use the notation $\boldsymbol{A}_j^i,\boldsymbol{B}_j^i,\boldsymbol{\Xi}_j,\boldsymbol{c}_j^i,\boldsymbol{d}_j^i,\boldsymbol{\omega}_j,\mu_j^i,\nu_j^i,\gamma_j$ for the coefficients that multiply $\boldsymbol{P}^i,\boldsymbol{Q}^i,\boldsymbol{P}',\boldsymbol{r}^i,\boldsymbol{f}^i,\boldsymbol{r}',w^i,g^i,w'$, respectively, in the $j$-th constraint, while $s_j$ denotes the right hand side.}

{\color{black} After obtaining an SLC decomposition, we leverage RPT to obtain a convex relaxation of the original problem. We multiply and divide the arguments of the quadratics with the linear terms multiplying them and linearize $\boldsymbol{U} = \boldsymbol{x} \boldsymbol{x}^T$, to obtain  
\begin{align*}
   p_3(\boldsymbol{x}) = \sum_{i=1}^n x_i p_2^i \left( \frac{\boldsymbol{U}_i}{x_i} \right) + \sum_{i=1}^n (1-x_i) q_2^i \left( \frac{\boldsymbol{x}-\boldsymbol{U}_i}{1-x_i} \right) + \beta_2(\boldsymbol{x}). 
\end{align*}  
We then remove the nonconvex constraint $\boldsymbol{U} = \boldsymbol{x} \boldsymbol{x}^T$, and try to achieve it by multiplying constraints in the feasible region and linearizing products with new variables, see \cite{bertsimascone}. Let $\overline{\mathcal{X}}$ denote the feasible region resulting from the constraint multiplications. In this case, for any $\boldsymbol{Z} \in \mathcal{Z}$, we obtain the following convex relaxation 
\begin{align*}
    g_3(\boldsymbol{x},\boldsymbol{U},\boldsymbol{Z}) &= \sum_{i=1}^n \frac{1}{x_i} \boldsymbol{U}_i^T \boldsymbol{P}^i \boldsymbol{U}_i + \boldsymbol{U}_i^T \boldsymbol{r}^i + w^i x_i + \frac{1}{1-x_i} \left( \boldsymbol{x} - \boldsymbol{U}_i \right)^T \boldsymbol{Q}^i \left( \boldsymbol{x} - \boldsymbol{U}_i \right) \\
    &\quad \quad + \left( \boldsymbol{x} - \boldsymbol{U}_i \right)^T \boldsymbol{f}^i + (1-x_i)g^i + \boldsymbol{x}^T \boldsymbol{P}' \boldsymbol{x} + \boldsymbol{x}^T \boldsymbol{r}' + w'.
\end{align*}    
Since any $\boldsymbol{Z} \in \mathcal{Z}$ gives us a convex relaxation, the questions that arises in this case, is which $\boldsymbol{Z}$ should we select. To answer this question, we view the problem as a robust optimization problem, in which $\boldsymbol{Z}$ are the uncertain parameters, and our goal is to optimize for all realizations $\boldsymbol{Z} \in \mathcal{Z}$, that is,
\begin{equation}
       \min_{(\boldsymbol{x},\boldsymbol{U}) \in \overline{\mathcal{X}}} \: \max_{\boldsymbol{Z} \in \mathcal{Z}} \:g_3(\boldsymbol{x},\boldsymbol{U},\boldsymbol{Z}). \label{eq:minmaxproblem_degree3}
\end{equation} 
Observe that in the above problem we find the tightest lower bound, out of the infinitely many possible ones. We next show that the above problem can be reformulated as a semi-definite optimization problem using duality.} We have the following result.
\begin{theorem}
\label{thm:reformulation_degree3}
{\color{black} $\left(\boldsymbol{x},\boldsymbol{U}\right)$ is an optimal solution for Problem \eqref{eq:minmaxproblem_degree3} if and only if $\exists \left( \boldsymbol{Y}, \boldsymbol{R}, \boldsymbol{\lambda} \right)$ such that $\left(\boldsymbol{x},\boldsymbol{U},\boldsymbol{Y},\boldsymbol{R},\boldsymbol{\lambda}\right)$ is an optimal solution for the following problem:}   
\begin{subequations}
			\begin{align}
				\displaystyle \min_{\boldsymbol{x}, \boldsymbol{U}, \boldsymbol{Y}, \boldsymbol{R}, \boldsymbol{\lambda}} &\quad - \sum_{j=1}^{m} \lambda_j s_j \nonumber \\
				{\rm s.t.} &\quad \boldsymbol{Y}^i + \sum_{j=1}^{m} \lambda_j \boldsymbol{A}_j^i \preceq \boldsymbol{0}, \quad i \in [n], \label{eq:best_slc_degree3_a} \\ 
                &\quad \boldsymbol{R}^i + \sum_{j=1}^{m} \lambda_j \boldsymbol{B}_j^i \preceq \boldsymbol{0}, \quad i \in [n],  \label{eq:best_slc_degree3_b} \\ 
                &\quad \boldsymbol{Y}^{n+1} + \sum_{j=1}^{m} \lambda_j \boldsymbol{\Xi}_j \preceq \boldsymbol{0}, \label{eq:best_slc_degree3_c} \\ 
                 &\quad \boldsymbol{U}_i + \sum_{j=1}^{m} \lambda_j \boldsymbol{c}_j^i = \boldsymbol{0}, \quad i \in [n], \label{eq:best_slc_degree3_d} \\ 
                &\quad \boldsymbol{x}-\boldsymbol{U}_i + \sum_{j=1}^{m} \lambda_j \boldsymbol{d}_j^i = \boldsymbol{0}, \quad i \in [n], \label{eq:best_slc_degree3_e} \\  
                &\quad \boldsymbol{x} + \sum_{j=1}^{m} \lambda_j \boldsymbol{\omega}_j = \boldsymbol{0}, \label{eq:best_slc_degree3_f} \\ 
                &\quad x_i + \sum_{j=1}^{m} \lambda_j \mu_j^i = 0, \quad i \in [n], \label{eq:best_slc_degree3_g} \\ 
                &\quad 1-x_i + \sum_{j=1}^{m} \lambda_j \nu_j^i = 0, \quad i \in [n], \label{eq:best_slc_degree3_h} \\ 
                &\quad 1 + \sum_{j=1}^{m} \lambda_j \gamma_j = 0, \label{eq:best_slc_degree3_i} \\ 
                &\quad \begin{pmatrix}
                        \boldsymbol{Y}^i & \boldsymbol{U}_i \\
                        \boldsymbol{U}_i^T & x_i
                   \end{pmatrix} \succeq \boldsymbol{0}, \quad i \in [n], \label{eq:best_slc_degree3_j} \\ 
               &\quad \begin{pmatrix}
                        \boldsymbol{R}^i & \boldsymbol{x}-\boldsymbol{U}_i \\
                        \left(\boldsymbol{x}-\boldsymbol{U}_i\right)^T & 1-x_i
                   \end{pmatrix} \succeq \boldsymbol{0}, \quad i \in [n], \label{eq:best_slc_degree3_k} \\ 
             &\quad \begin{pmatrix}
                        \boldsymbol{Y}^{n+1} & \boldsymbol{x} \\
                        \boldsymbol{x}^T & 1
                   \end{pmatrix} \succeq \boldsymbol{0}, \label{eq:best_slc_degree3_l} \\
              &\quad \left( \boldsymbol{x}, \boldsymbol{U} \right) \in \overline{\mathcal{X}}. \label{eq:best_slc_degree3_m}
			\end{align}
\end{subequations}
\end{theorem}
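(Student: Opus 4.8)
The plan is to fix the outer variables $(\boldsymbol{x},\boldsymbol{U})\in\overline{\mathcal{X}}$ and to dualize the inner maximization $\max_{\boldsymbol{Z}\in\mathcal{Z}}\,g_3(\boldsymbol{x},\boldsymbol{U},\boldsymbol{Z})$, exploiting that for fixed $(\boldsymbol{x},\boldsymbol{U})$ this is a \emph{linear} semidefinite program in the decomposition coefficients $\boldsymbol{Z}=(\boldsymbol{P}^i,\boldsymbol{Q}^i,\boldsymbol{P}',\boldsymbol{r}^i,\boldsymbol{f}^i,\boldsymbol{r}',w^i,g^i,w')$. Writing $\tfrac{1}{x_i}\boldsymbol{U}_i^T\boldsymbol{P}^i\boldsymbol{U}_i=\inner{\boldsymbol{P}^i}{\tfrac{1}{x_i}\boldsymbol{U}_i\boldsymbol{U}_i^T}$, $\tfrac{1}{1-x_i}(\boldsymbol{x}-\boldsymbol{U}_i)^T\boldsymbol{Q}^i(\boldsymbol{x}-\boldsymbol{U}_i)=\inner{\boldsymbol{Q}^i}{\tfrac{1}{1-x_i}(\boldsymbol{x}-\boldsymbol{U}_i)(\boldsymbol{x}-\boldsymbol{U}_i)^T}$ and $\boldsymbol{x}^T\boldsymbol{P}'\boldsymbol{x}=\inner{\boldsymbol{P}'}{\boldsymbol{x}\boldsymbol{x}^T}$ exhibits $g_3$ as an affine functional of $\boldsymbol{Z}$, while $\mathcal{Z}$ is the spectrahedron cut out by the $m$ coefficient-matching equalities and the cones $\boldsymbol{P}^i,\boldsymbol{Q}^i,\boldsymbol{P}'\succeq\boldsymbol{0}$. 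Taking the conic (Lagrangian) dual of the inner SDP collapses $\min_{(\boldsymbol{x},\boldsymbol{U})}\max_{\boldsymbol{Z}}$ into a single minimization, which I will identify with \eqref{eq:best_slc_degree3_a}--\eqref{eq:best_slc_degree3_m}.

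Concretely, I would attach a free multiplier $\lambda_j$ to each equality and form the Lagrangian $\sum_j\lambda_j s_j+\inner{C-\sum_j\lambda_j A_j}{\boldsymbol{Z}}$, where $C,A_j$ collect the coefficients above. Maximizing over the PSD blocks $\boldsymbol{P}^i,\boldsymbol{Q}^i,\boldsymbol{P}'$ is finite (and equals $0$) exactly when the corresponding block of $C-\sum_j\lambda_j A_j$ is negative semidefinite, producing $\tfrac{1}{x_i}\boldsymbol{U}_i\boldsymbol{U}_i^T\preceq\sum_j\lambda_j\boldsymbol{A}_j^i$, its $\boldsymbol{Q}^i$-analogue with $\tfrac{1}{1-x_i}(\boldsymbol{x}-\boldsymbol{U}_i)(\boldsymbol{x}-\boldsymbol{U}_i)^T$, and $\boldsymbol{x}\boldsymbol{x}^T\preceq\sum_j\lambda_j\boldsymbol{\Xi}_j$. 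Maximizing over the \emph{unconstrained} blocks $\boldsymbol{r}^i,\boldsymbol{f}^i,\boldsymbol{r}',w^i,g^i,w'$ is finite only when their coefficients vanish, which yields precisely the linear equalities \eqref{eq:best_slc_degree3_d}--\eqref{eq:best_slc_degree3_i}. The residual $\sum_j\lambda_j s_j$ is the dual objective; after the harmless sign convention $\lambda_j\mapsto-\lambda_j$ this becomes $-\sum_j\lambda_j s_j$ and carries the three LMI families into the form \eqref{eq:best_slc_degree3_a}--\eqref{eq:best_slc_degree3_c}.

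The remaining step linearizes the matrix-fractional coefficients, the only terms nonlinear in $(\boldsymbol{x},\boldsymbol{U})$. I would replace $\tfrac{1}{x_i}\boldsymbol{U}_i\boldsymbol{U}_i^T$ by a fresh matrix $\boldsymbol{Y}^i$ subject to the Schur-complement LMI \eqref{eq:best_slc_degree3_j}, which for $x_i>0$ is equivalent to $\boldsymbol{Y}^i\succeq\tfrac{1}{x_i}\boldsymbol{U}_i\boldsymbol{U}_i^T$, and analogously introduce $\boldsymbol{R}^i$ and $\boldsymbol{Y}^{n+1}$ through \eqref{eq:best_slc_degree3_k}--\eqref{eq:best_slc_degree3_l}. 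Since $-\sum_j\lambda_j s_j$ is independent of $\boldsymbol{Y}^i,\boldsymbol{R}^i,\boldsymbol{Y}^{n+1}$ and the constraints \eqref{eq:best_slc_degree3_a}--\eqref{eq:best_slc_degree3_c} only become \emph{easier} as these matrices decrease in the semidefinite order, the relaxation from equality to inequality is lossless: a feasible $\boldsymbol{Y}^i$ exists if and only if $\tfrac{1}{x_i}\boldsymbol{U}_i\boldsymbol{U}_i^T+\sum_j\lambda_j\boldsymbol{A}_j^i\preceq\boldsymbol{0}$. Block by block this reproduces the feasible region of the stated SDP and identifies the optimal $(\boldsymbol{x},\boldsymbol{U})$ of \eqref{eq:minmaxproblem_degree3} with the $(\boldsymbol{x},\boldsymbol{U})$-component of an optimal SDP solution, with $(\boldsymbol{Y},\boldsymbol{R},\boldsymbol{\lambda})$ serving as the dual certificates, giving the ``if and only if''.

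The main obstacle is justifying strong duality: value equality of the inner $\max$ with its dual $\min$, plus attainment, needs a constraint qualification. Slater's condition holds uniformly in $(\boldsymbol{x},\boldsymbol{U})$ because $\mathcal{Z}$ does not depend on them, and a strictly feasible point is obtained from the construction in the proof of Theorem~\ref{thm:exist_degree3} by choosing $\alpha$ strictly larger, which makes the Hessians strictly diagonally dominant, hence $\boldsymbol{P}^i,\boldsymbol{Q}^i,\boldsymbol{P}'\succ\boldsymbol{0}$, while the $\alpha$-terms still cancel and the equalities persist. Care is likewise required at the boundary $x_i\in\{0,1\}$, where $\tfrac{1}{x_i}$ and $\tfrac{1}{1-x_i}$ must be read through their closed convex extensions; the Schur-complement LMIs \eqref{eq:best_slc_degree3_j}--\eqref{eq:best_slc_degree3_l} encode exactly these closures and so handle the boundary automatically. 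Finally, I would check that whenever the inner maximum is $+\infty$ the dual is infeasible, so the outer minimization and the SDP agree on the admissible $(\boldsymbol{x},\boldsymbol{U})$.
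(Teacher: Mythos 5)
Your proof is correct and follows essentially the same route as the paper's: for fixed $(\boldsymbol{x},\boldsymbol{U})$ the inner maximization over $\boldsymbol{Z}\in\mathcal{Z}$ is a linear SDP, and dualizing it (with the sign flip on $\boldsymbol{\lambda}$ and the Schur-complement linearization of the matrix-fractional terms via $\boldsymbol{Y}^i,\boldsymbol{R}^i,\boldsymbol{Y}^{n+1}$) collapses the min--max into the stated single minimization. The only difference is that the paper outsources the duality step to Theorem~13.1 of \cite{bertsimas22}, whereas you carry it out explicitly and, usefully, supply the strong-duality justification (a Slater point in $\mathcal{Z}$ built from the construction in Theorem~\ref{thm:exist_degree3} with a strictly larger $\alpha$) that the paper leaves implicit.
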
 

\begin{proof}{Proof. }
We have the following inner maximization problem  
\begin{align*}
    \max_{\boldsymbol{Z} \in \mathcal{Z}_1} g_3(\boldsymbol{x},\boldsymbol{U},\boldsymbol{Z}),
\end{align*}
where the objective is
\begin{align*}
  g_3(\boldsymbol{x},\boldsymbol{U},\boldsymbol{Z}) &=  \sum_{i=1}^n \left \langle \boldsymbol{P}^i, \frac{1}{x_i} \boldsymbol{U}_i \boldsymbol{U}_i^T \right \rangle + \left \langle \boldsymbol{Q}^i, \frac{1}{1-x_i} \left(\boldsymbol{x}-\boldsymbol{U}_i \right) \left( \boldsymbol{x} - \boldsymbol{U}_i\right)^T \right \rangle  \\ 
  & \quad \quad + \boldsymbol{U}_i^T \boldsymbol{r}^i + x_i w^i + \left(\boldsymbol{x}-\boldsymbol{U}_i \right)^T \boldsymbol{f}^i + (1-x_i)g^i \\ 
  &\quad \quad + \left \langle \boldsymbol{P}', \boldsymbol{x} \boldsymbol{x}^T \right \rangle + \boldsymbol{x}^T \boldsymbol{r}' + w' .
\end{align*} 
From Theorem 13.1 in \cite{bertsimas22}, we obtain the final formulation, in which both the objective and constraints \ref{eq:best_slc_degree3_a} - \ref{eq:best_slc_degree3_i} are from the dual of the problem $\max_{\boldsymbol{Z} \in \mathcal{Z}_1} g_3(\boldsymbol{x},\boldsymbol{U},\boldsymbol{Z})$. Moreover, from Theorem 13.1 in \cite{bertsimas22}, it follows that the extra variables $\boldsymbol{Y}^i, \boldsymbol{R}^i$ need to satisfy the additional LMIs 
\begin{align*}
    \begin{pmatrix}
                \boldsymbol{Y}^i & \boldsymbol{U}_i \\
                \boldsymbol{U}_i^T & x_i
           \end{pmatrix} \succeq \boldsymbol{0}, \quad  \begin{pmatrix}
                \boldsymbol{R}^i & \boldsymbol{x}-\boldsymbol{U}_i \\
                \left(\boldsymbol{x}-\boldsymbol{U}_i\right)^T & 1-x_i
           \end{pmatrix} \succeq \boldsymbol{0}, \quad \begin{pmatrix}
                \boldsymbol{Y}^{n+1} & \boldsymbol{x} \\
                \boldsymbol{x}^T & 1
           \end{pmatrix} \succeq \boldsymbol{0}.
\end{align*} \hfill $\square$
\end{proof}  

As will appear later in the numerical experiments, Theorem \eqref{thm:reformulation_degree3} allows us to derive high quality lower bounds, which in many cases are equal to the optimal value.

An alternative way to ensure convexity is from the Gershgorin Theorem \cite{gershgorin1931uber}, which states that a sufficient condition for convexity is that the Hessian matrix is diagonally dominant. Therefore, we can replace the LMIs with the following linear inequalities
\begin{align*}
    P^i_{jj} \ge \sum_{k \neq j} \left \lvert P^i_{jk} \right \rvert, \ Q^i_{jj} \ge \sum_{k \neq j} \left \lvert Q^i_{jk} \right \rvert, \ P_{jj}' \ge \sum_{k \neq j} \left \lvert P{jk}' \right \rvert, \quad i,j \in [n].
\end{align*} 

{\color{black} Although this alternative decreases the number of LMIs in the resulting problem, see Theorem \ref{thm:gersh_degree3} in Appendix A, it leads to much worse bounds than Theorem \ref{thm:reformulation_degree3}.}

One question that arises, is that if we have additional linear constraints in the feasible region, then does it add value to include those as well in the SLC decomposition. The answer in this case is no, as stated in the following theorem.

\begin{theorem}
    Assume that $\mathcal{X} \subseteq [0,1]^n$ and that we also have a linear inequality $d - \boldsymbol{c}^T \boldsymbol{x} \ge 0$. Then, {\color{black} adding the extra term $(d-\boldsymbol{c}^T\boldsymbol{x})z_2(\boldsymbol{x})$ in the SLC decomposition \eqref{eq:poly3_decomp_gen}}, where $z_2$ is a convex quadratic, does not improve the objective of the best SLC decomposition. 
\end{theorem}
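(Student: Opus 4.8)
The plan is to fix the primal variables and compare the two robust inner problems. Writing Problem \eqref{eq:minmaxproblem_degree3} as $\min_{(\boldsymbol x,\boldsymbol U)\in\overline{\mathcal X}}\ \max_{\boldsymbol Z\in\mathcal Z} g_3(\boldsymbol x,\boldsymbol U,\boldsymbol Z)$, observe that adding the term $(d-\boldsymbol c^T\boldsymbol x)z_2(\boldsymbol x)$ only enlarges the set of admissible decompositions and augments the objective by the RPT image of this single term. It therefore suffices to show that, for every fixed $(\boldsymbol x,\boldsymbol U)\in\overline{\mathcal X}$, the value of the inner maximization is unchanged. Perspectifying the new term exactly as in the derivation of $g_3$ produces the extra contribution $\frac{1}{d-\boldsymbol c^T\boldsymbol x}\boldsymbol w^T\boldsymbol Z\boldsymbol w+\boldsymbol s^T\boldsymbol w+(d-\boldsymbol c^T\boldsymbol x)t$ for $z_2(\boldsymbol x)=\boldsymbol x^T\boldsymbol Z\boldsymbol x+\boldsymbol x^T\boldsymbol s+t$, where the only new object at the variable level is $\boldsymbol w = d\boldsymbol x-\boldsymbol U\boldsymbol c$, which is already an affine function of the existing variables $(\boldsymbol x,\boldsymbol U)$. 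Since setting $z_2\equiv 0$ recovers the original decomposition, the inner maximum with the extra term is at least the one without, and the whole content of the theorem is the reverse inequality.

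For the reverse inequality I would fold $z_2$ back into the base terms of \eqref{eq:poly3_decomp_gen} while preserving convexity. The key algebraic fact is that the multiplier can be split as $d-\boldsymbol c^T\boldsymbol x=\sum_i \theta_i^p x_i+\sum_i\theta_i^q(1-x_i)+\theta^\beta$ with \emph{nonnegative} coefficients $\theta_i^p=\max(-c_i,0)$, $\theta_i^q=\max(c_i,0)$ and $\theta^\beta = d-\sum_i\max(c_i,0)$. Replacing $p_2^i\leftarrow p_2^i+\theta_i^p z_2$, $q_2^i\leftarrow q_2^i+\theta_i^q z_2$ and $\beta_2\leftarrow\beta_2+\theta^\beta z_2$ produces a decomposition that (i) still reproduces $p_3(\boldsymbol x)$, because the added pieces sum to exactly $(d-\boldsymbol c^T\boldsymbol x)z_2(\boldsymbol x)$, and (ii) remains convex, because each $\theta$ is nonnegative and $z_2$ is convex. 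Hence the folded decomposition is feasible for the original set $\mathcal Z$.

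It then remains to show that folding does not decrease the relaxed objective, i.e. that the total RPT image of the folded pieces dominates the extra contribution above. Writing $g_{z_2}(\lambda,\boldsymbol y)=\lambda\, z_2(\boldsymbol y/\lambda)=\frac{1}{\lambda}\boldsymbol y^T\boldsymbol Z\boldsymbol y+\boldsymbol s^T\boldsymbol y+\lambda t$ for the perspective of $z_2$, the folded pieces contribute $\sum_i g_{z_2}(\theta_i^p x_i,\theta_i^p\boldsymbol U_i)+\sum_i g_{z_2}(\theta_i^q(1-x_i),\theta_i^q(\boldsymbol x-\boldsymbol U_i))+g_{z_2}(\theta^\beta,\theta^\beta\boldsymbol x)$, whereas the extra term equals $g_{z_2}(d-\boldsymbol c^T\boldsymbol x,\boldsymbol w)$. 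A direct check shows the first arguments sum to $d-\boldsymbol c^T\boldsymbol x$ and the second arguments sum to $d\boldsymbol x-\boldsymbol U\boldsymbol c=\boldsymbol w$. Since $z_2$ is convex, $g_{z_2}$ is jointly convex and positively homogeneous, hence subadditive, so the sum of the folded pieces is at least $g_{z_2}(d-\boldsymbol c^T\boldsymbol x,\boldsymbol w)$. Thus the folded feasible decomposition attains an objective value at least as large as any decomposition using the extra term, which yields the reverse inequality and the claim.

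The main obstacle is the convexity-preserving folding, which is exactly the requirement that the coefficients $\theta$ be nonnegative: this is available precisely when $d-\boldsymbol c^T\boldsymbol x\ge 0$ holds on the whole box $[0,1]^n$ (equivalently $d\ge\sum_i\max(c_i,0)$). When the inequality is valid only on $\mathcal X\subsetneq[0,1]^n$, the folding into the base linear terms alone need not have nonnegative coefficients; the resolution is that the constraint $d-\boldsymbol c^T\boldsymbol x\ge0$, once it defines the feasible region, already has its RPT products with the box bounds present in $\overline{\mathcal X}$, so the extra SLC term produces no valid inequality on $(\boldsymbol x,\boldsymbol U)$ that is not already implied. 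Verifying subadditivity of $g_{z_2}$ through positive homogeneity and joint convexity, together with the correct aggregation of the two arguments, is the other point requiring care, but both become routine once the nonnegative split is in hand.
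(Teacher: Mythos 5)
Your proof follows the same core idea as the paper's --- fold the extra term $(d-\boldsymbol{c}^T\boldsymbol{x})z_2(\boldsymbol{x})$ back into the base terms of \eqref{eq:poly3_decomp_gen} by writing $d-\boldsymbol{c}^T\boldsymbol{x}$ as a combination of $x_i$, $1-x_i$ and $1$ --- but the execution differs in two substantive ways. The paper uses a \emph{signed} split ($\lambda_i=-c_i$ for $i<n$, $\lambda_n=-c_n+d$, $\theta_n=d$) and then repairs convexity of the modified quadratics by adding $\alpha\|\boldsymbol{x}\|^2-\alpha n x_i$ and $\alpha\|\boldsymbol{x}\|^2$, whose weighted sum vanishes identically; the argument is then carried out purely at the level of the polynomial identity. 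You instead use a \emph{nonnegative} split, which makes convexity of the folded quadratics automatic, and --- crucially --- you compare the two decompositions at the level of the RPT relaxation $g_3$ rather than the polynomial identity, via subadditivity of the jointly convex, positively homogeneous perspective $g_{z_2}$. This extra step is not cosmetic: since the theorem concerns the value of $\max_{\boldsymbol{Z}}g_3(\boldsymbol{x},\boldsymbol{U},\boldsymbol{Z})$ at points where $\boldsymbol{U}\neq\boldsymbol{x}\boldsymbol{x}^T$, one genuinely needs the folded decomposition to dominate the extra-term decomposition in relaxed objective value, and your aggregation check ($\sum$ of first arguments $=d-\boldsymbol{c}^T\boldsymbol{x}$, $\sum$ of second arguments $=d\boldsymbol{x}-\boldsymbol{U}\boldsymbol{c}$) together with subadditivity delivers exactly that. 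In this respect your argument is tighter than the paper's, which only verifies the identity for the true polynomial.

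The price you pay is the requirement $\theta^\beta=d-\sum_i\max(c_i,0)\ge 0$, i.e.\ validity of $d-\boldsymbol{c}^T\boldsymbol{x}\ge 0$ on all of $[0,1]^n$ rather than merely on $\mathcal{X}$. Your closing remark that the general case is handled because the RPT products of the constraint with the box bounds are ``already implied'' in $\overline{\mathcal{X}}$ is an assertion, not a proof, and as written it leaves a gap for constraints that cut strictly into the box. One clean fix is to borrow the paper's device at that point: allow signed coefficients, restore convexity with the $\alpha\|\boldsymbol{x}\|^2-\alpha n x_i$ correction, and then extend your subadditivity comparison to account for the correction terms (noting $\sum_i\bigl(\tfrac{1}{x_i}\|\boldsymbol{U}_i\|^2+\tfrac{1}{1-x_i}\|\boldsymbol{x}-\boldsymbol{U}_i\|^2\bigr)\ge n\|\boldsymbol{x}\|^2$ and $U_{ii}\ge x_i^2$ from the LMIs); this residual comparison is the one point that still needs to be checked carefully in either version of the proof.
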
 

\begin{proof}{Proof. }
First, observe that the additional linear inequality can be written as 
\begin{align*}
    d - \boldsymbol{c}^T \boldsymbol{x} = \sum_{i=1}^n \lambda_i x_i + \sum_{i=1}^n \theta_i (1-x_i).
\end{align*} 
We can take $\lambda_i = -c_i, \theta_i = 0, \: i \in [n-1]$ and $\lambda_n = -c_n + d, \theta_n = d$. It follows that 
\begin{align*}
   & \sum_{i=1}^n x_i p_2^i(\boldsymbol{x}) + \sum_{i=1}^n(1-x_i) q_2^i(\boldsymbol{x}) + \beta_2(\boldsymbol{x}) + (d-\boldsymbol{c}^T \boldsymbol{x}) z_2(\boldsymbol{x})  \\
   &= \sum_{i=1}^n x_i p_2^i(\boldsymbol{x}) + \sum_{i=1}^n x_i \left(\lambda_i z_2(\boldsymbol{x}) \right) + \sum_{i=1}^n(1-x_i) q_2^i(\boldsymbol{x}) + \sum_{i=1}^n (1-x_i) (\theta_i z_2(\boldsymbol{x})) + \beta_2(\boldsymbol{x}) \\ 
    &= \sum_{i=1}^n x_i \overline{p}_2^i + \sum_{i=1}^n (1-x_i) \overline{q}_2^i(\boldsymbol{x}) + \beta_2(\boldsymbol{x}),
\end{align*} 
where $\overline{p}_2^i = p_2^i(\boldsymbol{x}) + \lambda_i z_2(\boldsymbol{x})$ and $\overline{q}_2^i(\boldsymbol{x}) = q_2^i(\boldsymbol{x}) + \theta_i z_2(\boldsymbol{x}) $. We can make the polynomials $\overline{p}_2^i, \overline{q}_2^i$ convex by transforming them as $\overline{p}_2^i(\boldsymbol{x}) = \overline{p}_2^i(\boldsymbol{x}) + \alpha \|\boldsymbol{x}\|^2 - \alpha n x_i$ and $\overline{q}_2^i(\boldsymbol{x}) = \overline{q}_2^i(\boldsymbol{x}) + \alpha \|\boldsymbol{x}\|^2$. Observe that $\sum_{i=1}^n x_i \overline{p}_2^i(\boldsymbol{x}) + \sum_{i=1}^n(1-x_i) \overline{q}_2^i(\boldsymbol{x}) = \sum_{i=1}^n x_i p_2^i(\boldsymbol{x}) + \sum_{i=1}^n (1-x_i) q_2^i(\boldsymbol{x})$. Therefore, since adding the extra term results in a decomposition that belongs in the proposed class and we are optimizing the best decomposition of the proposed class, the result follows. \hfill $\square$
\end{proof}  
In the next section, we deal with polynomials of degree 4.

\section{Polynomials of degree 4}   
We consider a generic polynomial of degree 4, that is,  
\begin{align*}
    p_4(\boldsymbol{x}) = \sum_{i \le j \le k \le l} c_{ijkl}^4 x_i x_j x_k x_l + \sum_{i \le j \le k} c_{ijk}^3 x_i x_j x_k + \boldsymbol{x}^T \boldsymbol{c}^2 \boldsymbol{x} + \boldsymbol{x}^T \boldsymbol{c}^1 + c^0,
\end{align*}    
where $\boldsymbol{c}^4 \in \mathbb{R}^{n \times n \times n \times n}, \boldsymbol{c}^3 \in \mathbb{R}^{n \times n \times n}, \boldsymbol{c}^2 \in \mathbb{R}^{n \times n}, \boldsymbol{c}^1 \in \mathbb{R}^n, c^0 \in \mathbb{R}$. We next discuss how to derive valid SLC decompositions for $p_4$, where the linear terms are defined by linearized products of two linear constraints, and the convex terms are polynomials of degree 2.  

\subsection{Existence of SLC decomposition}
We assume that $\mathcal{X} \subseteq [0,1]^n$. In this case, we will show that we can write $p_4(\boldsymbol{x})$ as the sum of quadratic functions, defined by $x_i x_j, \: x_i(1-x_j), \: (1-x_i)(1-x_j)$ for $i,j \in [n]$, multiplied by convex polynomials of degree 2, that is,
\begin{align}
    p_4(\boldsymbol{x}) &= \sum_{i \le j=1}^n x_i x_j p_2^{ij}(\boldsymbol{x}) + \sum_{i,j=1}^n x_i(1-x_j)q_2^{ij}(\boldsymbol{x}) + \sum_{i \le j=1}^n (1-x_i)(1-x_j) t_2^{ij}(\boldsymbol{x}), \\
    &\quad + \sum_{i=1}^n x_i p_2^i(\boldsymbol{x}) + \sum_{i=1}^n (1-x_i) q_2^i(\boldsymbol{x}) + \beta_2(\boldsymbol{x}), 
    \label{eq:poly4_decomp_gen}
\end{align} 
where  
\begin{align*}
    & p_2^{ij}(\boldsymbol{x}) = \boldsymbol{x}^T \boldsymbol{P}^{ij} \boldsymbol{x} + \boldsymbol{x}^T \boldsymbol{r}^{ij} + w^{ij}, \\ 
    & q_2^{ij}(\boldsymbol{x}) = \boldsymbol{x}^T \boldsymbol{Q}^{ij} \boldsymbol{x} + \boldsymbol{x}^T \boldsymbol{f}^{ij} + g^{ij}, \\ 
    & t_2^{ij}(\boldsymbol{x}) = \boldsymbol{x}^T \boldsymbol{T}^{ij} \boldsymbol{x} + \boldsymbol{x}^T \boldsymbol{h}^{ij} + s^{ij}, \\ 
    & p_2^i(\boldsymbol{x}) = \boldsymbol{x}^T \boldsymbol{P}^i \boldsymbol{x} + \boldsymbol{x}^T \boldsymbol{r}^i + w^i, \\ 
    & q_2^i(\boldsymbol{x}) = \boldsymbol{x}^T \boldsymbol{Q}^i \boldsymbol{x} + \boldsymbol{x}^T \boldsymbol{f}^i + g^i, \\ 
    & \beta_2(\boldsymbol{x}) = \boldsymbol{x}^T \boldsymbol{P}' \boldsymbol{x} + \boldsymbol{x}^T \boldsymbol{r}' + w',
\end{align*} 
and all polynomials $p_2^{ij}(\boldsymbol{x}), q_2^{ij}(\boldsymbol{x}), t_2^{ij}(\boldsymbol{x}),p_2^i(\boldsymbol{x}),q_2^i(\boldsymbol{x}),\beta_2(\boldsymbol{x})$ are convex. We have the following result.
\begin{theorem}
\label{thm:exist_degree4}
Every polynomial of degree 4, denoted as $p_4(\boldsymbol{x})$, can be written as 
\begin{align*}
    p_4(\boldsymbol{x}) &= \sum_{i \le j=1}^n x_i x_j p_2^{ij}(\boldsymbol{x}) + \sum_{i,j=1}^n x_i(1-x_j)q_2^{ij}(\boldsymbol{x}) + \sum_{i \le j=1}^n (1-x_i)(1-x_j) t_2^{ij}(\boldsymbol{x}), \\ 
    &\quad + \sum_{i=1}^n x_i p_2^i(\boldsymbol{x}) + \sum_{i=1}^n (1-x_i) q_2^i(\boldsymbol{x}) + \beta_2(\boldsymbol{x}),
\end{align*}  
where $p_2^{ij}(\boldsymbol{x}),q_2^{ij}(\boldsymbol{x}), t_2^{ij}(\boldsymbol{x}),p_2^{i}(\boldsymbol{x}),q_2^{i}(\boldsymbol{x}), \beta_2(\boldsymbol{x})$ are convex polynomials of degree 2, for all $i,j \in [n]$. 
\end{theorem}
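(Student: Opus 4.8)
The plan is to reduce the degree-4 statement to the already-proven degree-3 result (Theorem \ref{thm:exist_degree3}) by peeling off only the top-degree (quartic) part of $p_4$ and showing it can be carried by the three \emph{quadratic}-multiplier families while keeping every quadratic factor convex; everything of degree at most $3$ is then handed directly to Theorem \ref{thm:exist_degree3}. Concretely, I would first split $p_4(\boldsymbol{x}) = H(\boldsymbol{x}) + r(\boldsymbol{x})$, where $H$ collects the homogeneous degree-4 monomials and $r$ has degree at most $3$. The crux is to realize $H$ using only the multipliers $x_ix_j$ and $x_i(1-x_j)$ together with \emph{convex} (positive semidefinite) quadratic factors. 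The key observation is a sign asymmetry: the top-degree part of $x_ix_j\,\boldsymbol{x}^T\boldsymbol{P}^{ij}\boldsymbol{x}$ is $+x_ix_j\,\boldsymbol{x}^T\boldsymbol{P}^{ij}\boldsymbol{x}$, whereas the top-degree part of $x_i(1-x_j)\,\boldsymbol{x}^T\boldsymbol{Q}^{ij}\boldsymbol{x}$ is $-x_ix_j\,\boldsymbol{x}^T\boldsymbol{Q}^{ij}\boldsymbol{x}$, so the $x_i(1-x_j)$ family contributes the quartic with the \emph{opposite} sign to the $x_ix_j$ family.

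I would exploit this to represent an arbitrary (possibly indefinite) quartic. Because every quartic form can be written as $H = \sum_{i\le j} x_ix_j\,\boldsymbol{x}^T\boldsymbol{M}^{ij}\boldsymbol{x}$ for suitable symmetric $\boldsymbol{M}^{ij}$ (the products $x_ix_j\cdot x_kx_l$ span all quartic monomials), I split each $\boldsymbol{M}^{ij} = \boldsymbol{M}^{ij}_+ - \boldsymbol{M}^{ij}_-$ into its positive and negative semidefinite parts. I then set $\boldsymbol{P}^{ij} = \boldsymbol{M}^{ij}_+$, so that $p_2^{ij}$ is convex, and route the negative parts $\boldsymbol{M}^{ij}_-$ into the factors $\boldsymbol{Q}^{ij}\succeq\boldsymbol{0}$ of the $x_i(1-x_j)$ family, whose opposite-sign quartic contribution reproduces $-\sum_{i\le j}x_ix_j\,\boldsymbol{x}^T\boldsymbol{M}^{ij}_-\boldsymbol{x}$. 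The only bookkeeping point is distributing a symmetric-index term $\boldsymbol{M}^{ij}_-$, $i\le j$, over the ordered-index factors $\boldsymbol{Q}^{ij}$ (take $\boldsymbol{Q}^{ij}=\boldsymbol{Q}^{ji}=\tfrac12\boldsymbol{M}^{ij}_-$ for $i<j$ and $\boldsymbol{Q}^{ii}=\boldsymbol{M}^{ii}_-$), which preserves positive semidefiniteness. In this construction $\boldsymbol{T}^{ij}=\boldsymbol{0}$ already suffices, and all $p_2^{ij},q_2^{ij},t_2^{ij}$ are convex by design.

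It remains to collect the lower-order remainder. The factors $x_i(1-x_j)\,\boldsymbol{x}^T\boldsymbol{Q}^{ij}\boldsymbol{x}$ also produce, besides their quartic part, a cubic spillover $\sum_{i,j} x_i\,\boldsymbol{x}^T\boldsymbol{Q}^{ij}\boldsymbol{x}$, so the still-unmatched part is $R(\boldsymbol{x}) := r(\boldsymbol{x}) - \sum_{i,j} x_i\,\boldsymbol{x}^T\boldsymbol{Q}^{ij}\boldsymbol{x}$, a polynomial of degree at most $3$. Applying Theorem \ref{thm:exist_degree3} to $R$ yields convex quadratics $p_2^i,q_2^i,\beta_2$ with $R = \sum_i x_i p_2^i + \sum_i (1-x_i)q_2^i + \beta_2$, which exactly fills the single-linear families and the constant term of the claimed decomposition. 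Summing all contributions, the quartic pieces give $H_+ - H_- = H$, the cross-family spillover cancels against the $-\sum_{i,j}x_i\,\boldsymbol{x}^T\boldsymbol{Q}^{ij}\boldsymbol{x}$ inside $R$, and what survives is $H + r = p_4$, with every quadratic factor convex.

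I expect the main obstacle to be precisely the quartic step, because the direct analogue of the degree-3 argument does \emph{not} extend. Adding a uniform convexifier $\alpha\|\boldsymbol{x}\|^2$ to every quadratic factor (as in Theorem \ref{thm:exist_degree3}) leaves an uncancellable degree-4 residual proportional to $\|\boldsymbol{x}\|^4$, since the quadratic multipliers $x_ix_j,\, x_i(1-x_j),\, (1-x_i)(1-x_j)$ do not sum to a constant, unlike the identity $x_i + (1-x_i) = 1$ that drives the degree-3 cancellation. The resolution above sidesteps convexification entirely by using the opposite-sign quartic contribution of the $x_i(1-x_j)$ family to realize indefinite quartics directly from positive semidefinite factors; verifying that this matches $H$ exactly while keeping the symmetric-versus-ordered index bookkeeping consistent is the one place that needs care.
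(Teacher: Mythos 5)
Your proof is correct, but it takes a genuinely different route from the paper's. The paper's proof is the direct analogue of the degree-3 argument: it loads the quartic coefficients into $\boldsymbol{P}^{ij}=\boldsymbol{F}^{ij}$ and the cubic ones into $\boldsymbol{P}^{i}=\boldsymbol{F}^{i}$, adds a single Gershgorin-calibrated convexifier $\alpha\|\boldsymbol{x}\|^2$ to \emph{every} quadratic factor, and attempts to cancel the surplus through the linear correction $-\bigl(1+n+n(n+1)/2\bigr)\alpha x_i$ placed inside $p_2^i$. You instead isolate the homogeneous quartic part $H=\sum_{i\le j}x_ix_j\,\boldsymbol{x}^T\boldsymbol{M}^{ij}\boldsymbol{x}$, split $\boldsymbol{M}^{ij}=\boldsymbol{M}^{ij}_+-\boldsymbol{M}^{ij}_-$ into positive semidefinite parts, carry $\boldsymbol{M}^{ij}_+$ in the $x_ix_j$ family and $\boldsymbol{M}^{ij}_-$ in the opposite-signed $x_i(1-x_j)$ family, and hand the remaining degree-$\le 3$ polynomial (including the cubic spillover $\sum_{i,j}x_i\,\boldsymbol{x}^T\boldsymbol{Q}^{ij}\boldsymbol{x}$) to Theorem \ref{thm:exist_degree3}; your index bookkeeping $\boldsymbol{Q}^{ij}=\boldsymbol{Q}^{ji}=\tfrac12\boldsymbol{M}^{ij}_-$, $\boldsymbol{Q}^{ii}=\boldsymbol{M}^{ii}_-$ checks out and preserves positive semidefiniteness. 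Your closing worry about the uniform-$\alpha$ approach is well-founded and is in fact the substantive difference between the two proofs: the quadratic multipliers sum to
\begin{equation*}
\sum_{i\le j}x_ix_j+\sum_{i,j}x_i(1-x_j)+\sum_{i\le j}(1-x_i)(1-x_j)\;=\;\frac{n(n+1)}{2}+\sum_{i=1}^n x_i(x_i-1),
\end{equation*}
which is not constant, so adding $\alpha\|\boldsymbol{x}\|^2$ everywhere leaves a degree-4 residual proportional to $\alpha\|\boldsymbol{x}\|^2\sum_i x_i(x_i-1)$ that a purely linear correction inside $p_2^i$ cannot remove (it only absorbs the constant $1+n+n(n+1)/2$). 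Your sign-asymmetry construction sidesteps this cancellation issue entirely, at the mild cost of requiring a PSD/NSD splitting of each $\boldsymbol{M}^{ij}$ rather than a single scalar $\alpha$; the paper's route, by contrast, is cheaper to state but its cancellation step needs repair along exactly the lines you identify.
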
  

\begin{proof}{Proof. }
{\color{black} We define the matrices $F_{kl}^{ij} = c_{ijkl}^4, \: F_{jk}^i = c_{ijk}^3$ for all $i \le j \le k \le l \in [n]$. Moreover, we take $\boldsymbol{B} = \boldsymbol{c}^2, \: \boldsymbol{b} = \boldsymbol{c}^1, \: \eta = c^{0}$. We consider the degree 2 polynomials 
\begin{align*}
    & p_2^{ij}(\boldsymbol{x}) = \boldsymbol{x}^T \boldsymbol{F}^{ij} \boldsymbol{x} + \alpha \|\boldsymbol{x}\|^2, \quad i \le j \in [n], \\ 
    & q_2^{ij}(\boldsymbol{x}) = \alpha \|\boldsymbol{x}\|^2, \quad i \neq j \in [n], \\ 
    & t_2^{ij}(\boldsymbol{x}) = \alpha \|\boldsymbol{x}\|^2 , \quad i \le j \in [n], \\ 
    & p_2^i(\boldsymbol{x}) = \boldsymbol{x}^T \boldsymbol{F}^i \boldsymbol{x} + \alpha \|\boldsymbol{x}\|^2 - \left(1 + n + n(n+1)/2 \right)\alpha x_i, \quad i \in [n], \\ 
    & q_2^i(\boldsymbol{x}) = \alpha \|\boldsymbol{x}\|^2, \quad i \in [n], \\ 
    & \beta_2(\boldsymbol{x}) = \boldsymbol{x}^T \boldsymbol{B} \boldsymbol{x} + \boldsymbol{x}^T \boldsymbol{b} + \eta + \alpha \|\boldsymbol{x}\|^2,
\end{align*} 
where $\alpha_p^{ij} = \max_k \left\{ \sum_{l \neq k} |F_{kl}^{ij}| - F_{kk}^{ij} \right\}$ and $\alpha_p^i, \alpha_{\beta}$ are the same as in the proof of Theorem \ref{thm:exist_degree3}. We then take $\alpha = \max \left(0, \: \max_{i,j} \alpha_p^{ij}, \: \max_i \alpha_p^i, \: \alpha_{\beta} \right)$. By definition, the Hessian matrices of the quadratics are diagonally dominant, and therefore from the Gershgorin Theorem \cite{gershgorin1931uber}, it follows that they are positive semi-definite. Therefore, the polynomials $p_2^{ij},q_2^{ij},t_2^{ij},p_2^i,q_2^i,\beta_2$ are convex.} Further, they satisfy 
\begin{align*}
    & \sum_{i \le j=1}^n x_i x_j p_2^{ij}(\boldsymbol{x}) + \sum_{i,j=1}^n x_i(1-x_j) q_2^{ij}(\boldsymbol{x}) + \sum_{i \le j=1}^n (1-x_i)(1-x_j)t_2^{ij}(\boldsymbol{x}) \\
    &\quad + \sum_{i=1}^n p_2^i(\boldsymbol{x}) + \sum_{i=1}^n(1-x_i)q_2^i(\boldsymbol{x}) + \beta_2(\boldsymbol{x}) \: \: = \: \: p_4(\boldsymbol{x}).
\end{align*}  \hfill $\square$  
\end{proof}     

\begin{remark}
    We note that the result of Theorem \ref{thm:exist_degree4} also holds, if we only consider the terms in the objective, defined by $x_ix_j, \: x_i(1-x_j), \: (1-x_i)(1-x_j), \: i,j \in [n]$. However, by considering the additional terms, defined by $x_i, \: (1-x_i),\: 1, \: i \in [n]$, we obtain a stronger approximation.
\end{remark}

\subsection{Deriving the best SLC decomposition}
Observe that all valid SLC decompositions of Theorem \ref{thm:exist_degree4} are parametrized by coefficients, denoted as $\boldsymbol{Z}$, that satisfy certain equality  constraints, those of matching the coefficients of $p_4(\boldsymbol{x})$. In addition to the equality constraints, we also require that the degree 2 polynomials $p_2^{ij}(\boldsymbol{x}),q_2^{ij}(\boldsymbol{x}),t_2^{ij}(\boldsymbol{x})$ are convex, by imposing LMIs. Therefore, we can characterize all valid SLC decomposition of $p_4$ with the set $\mathcal{V}$, that can be written in compact form as follows:  
\begin{equation*}
    \mathcal{V} = \left\{ \begin{array}{c} 
    \boldsymbol{P}^{ij}, \boldsymbol{Q}^{ij}, \boldsymbol{T}^{ij} \in \mathbb{R}^{n \times n}, \\ 
     \boldsymbol{P}^{i}, \boldsymbol{Q}^{i}, \boldsymbol{P}' \in \mathbb{R}^{n \times n}, \\
    \boldsymbol{r}^{ij}, \boldsymbol{f}^{ij}, \boldsymbol{h}^{ij} \in \mathbb{R}^n, \\
    \boldsymbol{r}^{i}, \boldsymbol{f}^{i}, \boldsymbol{r}' \in \mathbb{R}^n, \\ 
    w^{ij}, g^{ij}, s^{ij} \in \mathbb{R}, \\
    w^{i}, g^{i}, w' \in \mathbb{R}, \end{array} :  \begin{array}{lr}
	 \sum_{i,j=1}^n \left \langle \overline{\boldsymbol{A}}_l^{ij}, \boldsymbol{P}^{ij} \right \rangle + \left \langle \overline{\boldsymbol{B}}_l^{ij}, \boldsymbol{Q}^{ij} \right \rangle + \left \langle \boldsymbol{C}_l^{ij}, \boldsymbol{T}^{ij} \right \rangle \\ 
     + \left( \overline{\boldsymbol{c}}_l^{ij} \right)^T \boldsymbol{r}^{ij}  + \left( \overline{\boldsymbol{d}}_l^{ij} \right)^T \boldsymbol{f}^{ij} + \left( \boldsymbol{e}_l^{ij} \right)^T \boldsymbol{h}^{ij} \\ 
     + \overline{\mu}_l^{ij} w^{ij} + \overline{\nu}_l^{ij} g^{ij} + \xi_l^{ij} \\ 
     + \sum_{i=1}^n \left \langle \boldsymbol{A}_l^{i}, \boldsymbol{P}^{i} \right \rangle + \left \langle \boldsymbol{B}_l^{i}, \boldsymbol{Q}^{i} \right \rangle + \left(\boldsymbol{c}_l^i\right)^T \boldsymbol{r}^i  \\ 
      + \left(\boldsymbol{d}_l^i\right)^T \boldsymbol{f}^i + + \mu_l^{i} w^{i} + \nu_l^{i} g^{i} \\ 
     + \left \langle \boldsymbol{\Xi}_l, \boldsymbol{P}' \right \rangle + \left(\boldsymbol{\omega}_l \right)^T \boldsymbol{r}' + \gamma_l w' = q_l, \: l \in [m], \\ 
    \boldsymbol{P}^{ij}, \boldsymbol{Q}^{ij}, \boldsymbol{T}^{ij}, \boldsymbol{P}^i, \boldsymbol{Q}^i, \boldsymbol{P}' \succeq \boldsymbol{0}, \ i \le j \in [n].
	 \end{array}  \right\}
\end{equation*}  

{\color{black} In the definition of the set $\mathcal{V}$, we use the notation $\overline{\boldsymbol{A}}_l^{ij}, \overline{\boldsymbol{B}}_l^{ij}, \boldsymbol{C}_l^{ij}, \boldsymbol{A}_l^{i}, \boldsymbol{B}_l^{i}, \boldsymbol{\Xi}_l, \overline{\boldsymbol{c}}_l^{ij}, \overline{\boldsymbol{d}}_l^{ij},$ $\boldsymbol{\omega}_l, \overline{\mu}_l^{ij}, \overline{\nu}_l^{ij}, \xi_l^{ij}, \mu_l^{i}, \nu_l^{i}, \gamma_l$ for the coefficients that multiply $\boldsymbol{P}^{ij},\boldsymbol{Q}^{ij},\boldsymbol{T}^{ij},\boldsymbol{P}^i,\boldsymbol{Q}^i,\boldsymbol{P}',\boldsymbol{r}^{ij},$ $\boldsymbol{f}^{ij},\boldsymbol{h}^{ij},\boldsymbol{r}^i,\boldsymbol{f}^i,\boldsymbol{r}',w^{ij},g^{ij},s^{ij},w^i,g^i,w'$, respectively, in the $l$-th constraint, while $q_l$ denotes the right-hand side.}

{\color{black} After obtaining an SLC decomposition, we apply RPT, while linearizing $\boldsymbol{U} = \boldsymbol{x} \boldsymbol{x}^T$, to obtain the following degree 3 terms in the objective objective
\begin{align*}
    & \sum_{i \le j=1}^n u_{ij} \left( \boldsymbol{x}^T \boldsymbol{P}^{ij} \boldsymbol{x} + \boldsymbol{x}^T \boldsymbol{r}^{ij} + w^{ij} \right) + \sum_{i,j=1}^n (x_i-u_{ij}) \left( \boldsymbol{x}^T \boldsymbol{Q}^{ij} \boldsymbol{x} + \boldsymbol{x}^T \boldsymbol{f}^{ij} + g^{ij} \right) \\ 
    &\quad \quad + \sum_{i \le j=1}^n (1-x_i-x_j+u_{ij}) \left( \boldsymbol{x}^T \boldsymbol{T}^{ij} \boldsymbol{x} + \boldsymbol{x}^T \boldsymbol{h}^{ij} + s^{ij} \right).
\end{align*}  
We further linearize $\boldsymbol{v}_{ij} = u_{ij} \boldsymbol{x}$. Further, we remove the nonconvex equality constraints $\boldsymbol{U} = \boldsymbol{x} \boldsymbol{x}^T, \: \: \boldsymbol{v}_{ij} = u_{ij} \boldsymbol{x} $, and try to achieve them by multiplying constraints in the feasible region and linearizing products with new variables, see \cite{bertsimascone}. We denote the new set of constraints as $\overline{\mathcal{X}}$. As a result, we obtain the following convex relaxation 
\begin{align*}
    g_4(\boldsymbol{x},\boldsymbol{U},\boldsymbol{V},\boldsymbol{Z}) &= g_3(\boldsymbol{x},\boldsymbol{U},\boldsymbol{Z}) + \sum_{i \le j =1}^n \frac{1}{u_{ij}} \boldsymbol{v}_{ij}^T \boldsymbol{P}^{ij} \boldsymbol{v}_{ij} + \boldsymbol{v}_{ij}^T \boldsymbol{r}^{ij} + u_{ij} w^{ij} \\  
    &+ \sum_{i,j=1}^n \frac{1}{x_i-u_{ij}} \left( \boldsymbol{U}_i - \boldsymbol{v}_{ij} \right)^T \boldsymbol{Q}^{ij} \left( \boldsymbol{U}_i - \boldsymbol{v}_{ij} \right) + \left( \boldsymbol{U}_i - \boldsymbol{v}_{ij} \right)^T \boldsymbol{f}^{ij} \\ 
    &\quad \quad \quad \quad + (x_i-u_{ij})g^{ij} \\ 
    &+ \sum_{i \le j=1}^n \frac{1}{1-x_i-x_j+u_{ij}} \boldsymbol{\theta}_{ij}^T \boldsymbol{T}^{ij} \boldsymbol{\theta}_{ij} +  \boldsymbol{\theta}_{ij}^T \boldsymbol{h}^{ij} \\ 
    &\quad \quad \quad \quad + (1-x_i-x_j+u_{ij})g^{ij}, 
\end{align*}    
where $\boldsymbol{\theta}_{ij} = \boldsymbol{x} - \boldsymbol{U}_i - \boldsymbol{U}_j + \boldsymbol{v}_{ij}$. Observe that $g_4$ is linear in the coefficients $\boldsymbol{Z}$ and convex in the problem variables $\left(\boldsymbol{x},\boldsymbol{U},\boldsymbol{V}\right)$, since it is defined as convex quadratics divided by linear functions. We follow the same reasoning as in the case of degree 3, that is, viewing the problem as a robust optimization problem and trying to optimize for all realization $\boldsymbol{Z} \in \mathcal{V}$. As a result, we obtain the following problem
\begin{equation}
       \min_{\left(\boldsymbol{x},\boldsymbol{U},\boldsymbol{V}\right) \in \overline{\mathcal{X}} } \: \max_{\boldsymbol{Z} \in \mathcal{V}} \: g_4(\boldsymbol{x},\boldsymbol{U},\boldsymbol{V},\boldsymbol{Z}). \label{eq:minmaxproblem_degree4}
\end{equation} 
Observe that in the above problem, we find the tightest lower bound out of the many possible ones. By using duality, we next show that the above problem can be reformulated as a semi-definite optimization problem.} We have the following result.
\begin{theorem}
\label{thm:reformulation_degree4}
   {\color{black} $\left(\boldsymbol{x},\boldsymbol{U},\boldsymbol{V}\right)$ is an optimal solution for Problem \eqref{eq:minmaxproblem_degree4} if and only if $\exists \left( \boldsymbol{Y}, \boldsymbol{R}, \boldsymbol{E}, \boldsymbol{\lambda} \right)$ such that $\left(\boldsymbol{x},\boldsymbol{U},\boldsymbol{V},\boldsymbol{Y},\boldsymbol{R},\boldsymbol{E},\boldsymbol{\lambda}\right)$ is an optimal solution for the following problem:}
   \begin{subequations}
   \begin{align}
  \displaystyle \min_{\boldsymbol{x}, \boldsymbol{U}, \boldsymbol{V}, \boldsymbol{Y}, \boldsymbol{R}, \boldsymbol{E}, \boldsymbol{\lambda}} &\quad - \sum_{l=1}^{m} \lambda_l q_l \nonumber \\
     {\rm s.t.} &\quad \eqref{eq:best_slc_degree3_a} - \eqref{eq:best_slc_degree3_l} \nonumber \\ 
        &\quad \boldsymbol{Y}^{ij} + \sum_{l=1}^{m} \lambda_l \overline{\boldsymbol{A}}_l^{ij} \preceq \boldsymbol{0}, \quad i \le j \in [n], \label{eq:best_slc_degree4_a}  \\
        &\quad  \boldsymbol{R}^{ij} + \sum_{l=1}^{m} \lambda_l \overline{\boldsymbol{B}}_l^{ij} \preceq \boldsymbol{0}, \quad i,j \in [n], \label{eq:best_slc_degree4_b}  \\
        &\quad  \boldsymbol{E}^{ij} + \sum_{l=1}^{m} \lambda_l \boldsymbol{C}_l^{ij} \preceq \boldsymbol{0}, \quad i \le j \in [n], \label{eq:best_slc_degree4_c} \\ 
        &\quad \boldsymbol{Y}^{i} + \sum_{l=1}^{m} \lambda_l \boldsymbol{A}_l^{i} \preceq \boldsymbol{0}, \quad i \in [n], \label{eq:best_slc_degree4_d} \\
        &\quad \boldsymbol{R}^{i} + \sum_{l=1}^{m} \lambda_l \boldsymbol{B}_l^{i} \preceq \boldsymbol{0}, \quad i \in [n], \label{eq:best_slc_degree4_e} \\ 
        &\quad \boldsymbol{Y}^{n+1} + \sum_{l=1}^{m} \lambda_l \boldsymbol{\Xi}_l \preceq \boldsymbol{0}, \label{eq:best_slc_degree4_f} \\
        &\quad \boldsymbol{v}_{ij} + \sum_{l=1}^{m} \lambda_{l} \boldsymbol{c}_l^{ij} = \boldsymbol{0}, \quad i \le j \in [n], \label{eq:best_slc_degree4_g}  \\ 
        &\quad \boldsymbol{U}_i - \boldsymbol{v}_{ij} + \sum_{l=1}^{m} \lambda_{l} \boldsymbol{d}_l^{ij} = \boldsymbol{0}, \quad i, j \in [n], \label{eq:best_slc_degree4_h}  \\
        &\quad \boldsymbol{x} - \boldsymbol{U}_i - \boldsymbol{U}_j + \boldsymbol{v}_{ij} + \sum_{l=1}^{m} \lambda_{l} \boldsymbol{e}_l^{ij} = \boldsymbol{0}, \quad i \le j \in [n], \label{eq:best_slc_degree4_i}  \\
        &\quad u_{ij} + \sum_{l=1}^{m} \lambda_l \mu_l^{ij} = 0, \quad i \le j \in [n], \label{eq:best_slc_degree4_j} \\ 
        &\quad x_i-u_{ij} + \sum_{l=1}^{m} \lambda_l \nu_l^{ij} = 0, \quad i,j \in [n], \label{eq:best_slc_degree4_k} \\  
        &\quad 1-x_i-x_j+u_{ij} + \sum_{l=1}^{m} \lambda_l \xi_l^{ij} = 0, \quad i \le j \in [n], \label{eq:best_slc_degree4_l} \\  
        &\quad \begin{pmatrix}
                \boldsymbol{Y}^{ij} & \boldsymbol{v}_{ij} \\
                \left(\boldsymbol{v}_{ij}\right)^T & u_{ij}
           \end{pmatrix} \succeq \boldsymbol{0}, \quad i \le j \in [n], \label{eq:best_slc_degree4_m} \\ 
       &\quad \begin{pmatrix}
                \boldsymbol{R}^{ij} & \boldsymbol{U}_i-\boldsymbol{v}_{ij} \\
                \left(\boldsymbol{U}_i-\boldsymbol{v}_{ij}\right)^T & x_i-u_{ij}
           \end{pmatrix} \succeq \boldsymbol{0}, \quad i,j \in [n], \label{eq:best_slc_degree4_n} \\ 
        &\quad \begin{pmatrix}
                \boldsymbol{E}^{ij} & \boldsymbol{x}-\boldsymbol{U}_i-\boldsymbol{U}_j+\boldsymbol{v}_{ij} \\
                \left( \boldsymbol{x}-\boldsymbol{U}_i-\boldsymbol{U}_j+\boldsymbol{v}_{ij} \right)^T & 1- x_i -x_j + u_{ij}
           \end{pmatrix} \succeq \boldsymbol{0}, \: i \le j \in [n], \label{eq:best_slc_degree4_l} \\ 
      &\quad \left( \boldsymbol{x}, \boldsymbol{U}, \boldsymbol{V} \right) \in \overline{\mathcal{X}}. \label{eq:best_slc_degree4_o}
			\end{align}
\end{subequations}
\end{theorem}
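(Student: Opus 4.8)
**The plan is to mirror the degree-3 argument in Theorem \ref{thm:reformulation_degree3}, applying robust-optimization duality to the inner maximization over $\boldsymbol{Z} \in \mathcal{V}$.** The key observation, already noted in the excerpt, is that $g_4(\boldsymbol{x},\boldsymbol{U},\boldsymbol{V},\boldsymbol{Z})$ is \emph{linear} in the decomposition parameters $\boldsymbol{Z}$ and convex in the problem variables $(\boldsymbol{x},\boldsymbol{U},\boldsymbol{V})$, and that $\mathcal{V}$ is a spectrahedron: it is the intersection of the affine subspace carved out by the $m$ coefficient-matching equalities with the PSD cones $\boldsymbol{P}^{ij},\boldsymbol{Q}^{ij},\boldsymbol{T}^{ij},\boldsymbol{P}^i,\boldsymbol{Q}^i,\boldsymbol{P}' \succeq \boldsymbol{0}$. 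Hence for fixed $(\boldsymbol{x},\boldsymbol{U},\boldsymbol{V})$ the inner problem $\max_{\boldsymbol{Z}\in\mathcal{V}} g_4$ is a semidefinite program whose objective coefficients are exactly the quantities $\frac{1}{u_{ij}}\boldsymbol{v}_{ij}\boldsymbol{v}_{ij}^T$, $\frac{1}{x_i-u_{ij}}(\boldsymbol{U}_i-\boldsymbol{v}_{ij})(\boldsymbol{U}_i-\boldsymbol{v}_{ij})^T$, $\frac{1}{1-x_i-x_j+u_{ij}}\boldsymbol{\theta}_{ij}\boldsymbol{\theta}_{ij}^T$ paired against the respective matrix variables, together with the linear-in-$\boldsymbol{Z}$ terms.

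\medskip
\noindent\textbf{The main steps, in order.} First I would write the inner maximization explicitly as an SDP in $\boldsymbol{Z}$, identifying the objective as $\langle \boldsymbol{Z}, \boldsymbol{G}(\boldsymbol{x},\boldsymbol{U},\boldsymbol{V})\rangle$ where the ``cost matrices'' attached to $\boldsymbol{P}^{ij},\boldsymbol{Q}^{ij},\boldsymbol{T}^{ij}$ are the rank-one outer products above divided by the relevant linear terms. Second, I would take the SDP dual with respect to the equality constraints (dual multipliers $\boldsymbol{\lambda}\in\mathbb{R}^m$) and the PSD constraints (dual PSD matrices, which become $\boldsymbol{Y}^{ij},\boldsymbol{R}^{ij},\boldsymbol{E}^{ij},\boldsymbol{Y}^i,\boldsymbol{R}^i,\boldsymbol{Y}^{n+1}$). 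The stationarity conditions for the PSD-constrained blocks yield the cone inequalities \eqref{eq:best_slc_degree4_a}--\eqref{eq:best_slc_degree4_f}, while the linear $\boldsymbol{r},\boldsymbol{f},\boldsymbol{h},w,g,s$ variables, being free, produce the equality constraints \eqref{eq:best_slc_degree4_g}--\eqref{eq:best_slc_degree4_l}. Third, exactly as in the degree-3 proof, I would invoke Theorem 13.1 in \cite{bertsimas22}: this result handles objective terms of the form $\langle \boldsymbol{P}, \frac{1}{t}\boldsymbol{a}\boldsymbol{a}^T\rangle$ by replacing each with an auxiliary matrix $\boldsymbol{Y}$ constrained through a Schur-complement LMI, which is precisely what generates \eqref{eq:best_slc_degree4_m}--\eqref{eq:best_slc_degree4_l} (the three PSD blocks with Schur complements $u_{ij}$, $x_i-u_{ij}$, and $1-x_i-x_j+u_{ij}$). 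Finally, I would note that the degree-3 portion of $g_4$, namely $g_3(\boldsymbol{x},\boldsymbol{U},\boldsymbol{Z})$, contributes exactly the constraints \eqref{eq:best_slc_degree3_a}--\eqref{eq:best_slc_degree3_l} already established in Theorem \ref{thm:reformulation_degree3}, so the full dual is the union of the two constraint systems, and strong duality (guaranteed by a Slater point, since the diagonally-dominant construction of Theorem \ref{thm:exist_degree4} yields a strictly feasible $\boldsymbol{Z}$) closes the equivalence in both directions.

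\medskip
\noindent\textbf{The main obstacle} is bookkeeping rather than a genuine mathematical difficulty: I must verify that the Schur-complement denominators are correct and positive on the feasible region, i.e.\ that $u_{ij}>0$, $x_i-u_{ij}>0$, and $1-x_i-x_j+u_{ij}>0$ hold for $(\boldsymbol{x},\boldsymbol{U},\boldsymbol{V})\in\overline{\mathcal{X}}$, since these are exactly the linearized nonnegative products $x_ix_j$, $x_i(1-x_j)$, $(1-x_i)(1-x_j)$ whose nonnegativity is enforced by the RPT constraint multiplications defining $\overline{\mathcal{X}}$. Care is also needed because the new quadratic-multiplier terms introduce the second linearization $\boldsymbol{v}_{ij}=u_{ij}\boldsymbol{x}$ on top of $\boldsymbol{U}=\boldsymbol{x}\boldsymbol{x}^T$, so the Schur complements now involve $\boldsymbol{v}_{ij}$ and $\boldsymbol{\theta}_{ij}$ rather than $\boldsymbol{x}$ and $\boldsymbol{U}_i$ alone; matching each coefficient vector/matrix $(\overline{\boldsymbol{A}}_l^{ij},\overline{\boldsymbol{c}}_l^{ij},\ldots)$ in $\mathcal{V}$ to its partner in the stationarity system is the delicate part, but it is mechanical once the degree-3 template is in hand.
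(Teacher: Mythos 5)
Your proposal is correct and follows essentially the same route as the paper's proof: write the inner maximization over $\boldsymbol{Z} \in \mathcal{V}$ explicitly as an SDP whose cost matrices are the rank-one outer products divided by the linearized products, dualize via Theorem 13.1 of \cite{bertsimas22} to obtain the objective and constraints \eqref{eq:best_slc_degree4_a}--\eqref{eq:best_slc_degree4_l}, inherit \eqref{eq:best_slc_degree3_a}--\eqref{eq:best_slc_degree3_l} from the degree-3 case, and attach the Schur-complement LMIs for the auxiliary matrices $\boldsymbol{Y}^{ij}, \boldsymbol{R}^{ij}, \boldsymbol{E}^{ij}$. The only additions beyond the paper's argument are your explicit remarks on Slater's condition and on the nonnegativity of the denominators, both of which the paper leaves implicit and which only strengthen the write-up.
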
  

\begin{proof}{Proof. }
We have the following inner maximization problem  
\begin{align*}
    \max_{\boldsymbol{Z} \in \mathcal{V}} g_4(\boldsymbol{x},\boldsymbol{U}, \boldsymbol{V},\boldsymbol{Z}),
\end{align*}
where the objective is
\begin{align*}
  g_4(\boldsymbol{x},\boldsymbol{U},\boldsymbol{V},\boldsymbol{Z}) &=  \sum_{i \le j=1}^n \left \langle \boldsymbol{P}^{ij}, \frac{1}{u_{ij}} \boldsymbol{v}_{ij} \boldsymbol{v}_{ij}^T \right \rangle + \boldsymbol{v}_{ij}^T \boldsymbol{r}^{ij} + u_{ij} w^{ij} \\ 
  &+ \sum_{i,j=1}^n \left \langle \boldsymbol{Q}^{ij}, \frac{1}{x_i-u_{ij}} \left(\boldsymbol{U}_i-\boldsymbol{v}_{ij} \right) \left( \boldsymbol{U}_i - \boldsymbol{v}_{ij} \right)^T \right \rangle \\ 
  &\quad \quad \quad + \left(\boldsymbol{U}_i-\boldsymbol{v}_{ij} \right)^T \boldsymbol{f}^{ij} + (x_i-u_{ij})g^{ij} \\ 
  &+ \sum_{i \le j=1}^n \left \langle \boldsymbol{T}^{ij}, \frac{1}{1-x_i-x_j+u_{ij}} \boldsymbol{\theta}_{ij} \boldsymbol{\theta}_{ij}^T \right \rangle \\ 
  &\quad \quad \quad + \boldsymbol{\theta}_{ij}^T \boldsymbol{h}^{ij} + (1-x_i-x_j+u_{ij})s^{ij},
\end{align*}  
where $\boldsymbol{\theta}_{ij} = \boldsymbol{x} - \boldsymbol{U}_i - \boldsymbol{U}_j + \boldsymbol{v}_{ij}$. First, we note that constraints \ref{eq:best_slc_degree3_a} - \ref{eq:best_slc_degree3_l} follow from Theorem \ref{thm:reformulation_degree3}. Moreover, from Theorem 13.1 in \cite{bertsimas22}, we obtain the final formulation, in which both the objective and constraints \ref{eq:best_slc_degree4_a} - \ref{eq:best_slc_degree4_l} are from the dual of the problem $\max_{\boldsymbol{Z} \in \mathcal{V}} g_4(\boldsymbol{x},\boldsymbol{U},\boldsymbol{V},\boldsymbol{Z})$. Moreover, from Theorem 13.1 in \cite{bertsimas22}, it follows that the extra variables $\boldsymbol{Y}^{ij}, \boldsymbol{R}^{ij}, \boldsymbol{E}^{ij}$ need to satisfy the additional LMIs 
\begin{align*}
    \begin{pmatrix}
                \boldsymbol{Y}^{ij} & \boldsymbol{v}_{ij} \\
                \left(\boldsymbol{v}_{ij}\right)^T & u_{ij}
           \end{pmatrix} \succeq \boldsymbol{0}, \quad  \begin{pmatrix}
                \boldsymbol{R}^{ij} & \boldsymbol{U}_i-\boldsymbol{v}_{ij} \\
                \left(\boldsymbol{U}_i-\boldsymbol{v}_{ij}\right)^T & x_i-u_{ij}
           \end{pmatrix} \succeq \boldsymbol{0}, \\  
           \quad \begin{pmatrix}
                \boldsymbol{E}^{ij} & \boldsymbol{x}-\boldsymbol{U}_i-\boldsymbol{U}_j+\boldsymbol{v}_{ij} \\
                \left( \boldsymbol{x}-\boldsymbol{U}_i-\boldsymbol{U}_j+\boldsymbol{v}_{ij} \right)^T & 1- x_i -x_j + u_{ij}
           \end{pmatrix} \succeq \boldsymbol{0}.
\end{align*} \hfill $\square$
\end{proof}  
We note that, similarly to the degree 3 case, an alternative way to ensure convexity is by ensuring that the Hessian matrices of the quadratics are diagonally dominant, see Appendix A for the formulation. However, this approach results in bounds that are much worse than Theorem \eqref{thm:reformulation_degree4}. 

\begin{remark}
For problems that involve polynomial constraints, we can apply the procedure outlined so far in order to derive the best SLC decomposition for each one of them.
\end{remark} 

\begin{remark}
    We note that one could also optimize for the best SLC decomposition with the adversarial approach \cite{mutapcic2009cutting}. Briefly, the adversarial approach alternates between solving a master problem, by including a finite number of scenarios $\boldsymbol{Z}$, and a subproblem, in which the worst case scenarios are computed. The procedure is repeated iteratively until the worst case scenarios do not violate constraints. One advantage of this approach is that the total number of LMIs is reduced, however it may require a large number of iterations to converge. 
\end{remark}

\section{Polynomials of arbitrary degree}  
\subsection{Existence of SLC decomposition}
{\color{black} In the more general case, we consider Problem \eqref{eq:genericproblem}, with $p_d$ being an arbitrary polynomial of degree $d$. We propose to obtain an SLC decomposition for $p_d$, by generalizing the one derived in the case of degree 4, see Section 3. More precisely, for a polynomial of arbitrary degree ($d$), we consider the SLC decomposition, in which the linear terms are defined as $d-2$ linearized products of the linear constraints of the feasible region, and the convex terms are polynomials of degree 2. We define the following functions that contain products of degree up to $d-2$
\begin{equation}
       f_{\mathcal{I},\mathcal{J}}(\boldsymbol{x}) = \prod_{i \in \mathcal{I}} x_i \prod_{j \in \mathcal{J}} (1-x_j), \label{eq:function_definition_f}
\end{equation} 
where $\mathcal{I}$ and $\mathcal{J}$ are disjoint subsets of $\{1,.,n\}$, such that $|\mathcal{I} \cup \mathcal{J}|$ is the degree of the polynomial. Observe that by appropriately selecting $\mathcal{I}$ and $\mathcal{J}$, we can cover all polynomial terms of degree up to $d-2$. We have the following result regarding the existence of this type of SLC decomposition.
\begin{theorem}
\label{thm:exists_degree_arb}
Assuming that $\mathcal{X} \subseteq [0,1]^n$, then every polynomial of degree $d$, denoted as $p_d(\boldsymbol{x})$, can be written as 
\begin{align*}
    p_d(\boldsymbol{x}) = \sum_{\substack{\mathcal{I}, \mathcal{J} \\ |\mathcal{I} \cup \mathcal{J}| \leq d-2 \\ \mathcal{I} \cap \mathcal{J} = \emptyset}} f_{\mathcal{I},\mathcal{J}}(\boldsymbol{x}) q_{\mathcal{I},\mathcal{J}}(\boldsymbol{x}),
\end{align*}   
where $f_{\mathcal{I},\mathcal{J}}(\boldsymbol{x})$ is defined as in \ref{eq:function_definition_f}, and $q_{\mathcal{I},\mathcal{J}}(\boldsymbol{x})$ are convex quadratics. 
\end{theorem}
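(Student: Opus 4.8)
The plan is to carry out, at arbitrary degree, the same three-move construction used in the proofs of Theorems~\ref{thm:exist_degree3} and~\ref{thm:exist_degree4}: first build a (possibly nonconvex) base decomposition by matching coefficients, then convexify every quadratic with a uniform $\alpha\|\boldsymbol{x}\|^2$ shift via Gershgorin, and finally cancel the aggregate of all these shifts with a single linear correction that does not disturb convexity.

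First I would produce the base decomposition. Every monomial of $p_d$ of degree $m$ with $2 \le m \le d$ factors as $f_{\mathcal{I},\emptyset}(\boldsymbol{x})$ times a quadratic monomial, where $f_{\mathcal{I},\emptyset} = \prod_{i \in \mathcal{I}} x_i$ collects $m-2$ of its variables, with repetitions allowed exactly as the degree-$4$ construction already uses $x_i x_j$ with $i=j$. The monomials of degree $0$ and $1$ are absorbed into the quadratic $q_{\emptyset,\emptyset}=\beta_2$ attached to the constant multiplier. Grouping by multiplier gives $p_d = \sum_{\mathcal{I}} f_{\mathcal{I},\emptyset}(\boldsymbol{x})\,\widetilde{q}_{\mathcal{I},\emptyset}(\boldsymbol{x})$ with quadratic $\widetilde{q}_{\mathcal{I},\emptyset}$; this already has the claimed form, except that the quadratics need not be convex and the multipliers with $\mathcal{J}\neq\emptyset$ so far carry a zero quadratic.

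Next I would convexify. To each quadratic I add $\alpha\|\boldsymbol{x}\|^2$, choosing $\alpha$ from the Gershgorin bounds on the Hessians of the base quadratics (as in the earlier proofs) so that every shifted Hessian is diagonally dominant, hence positive semidefinite \citep{gershgorin1931uber}, hence convex. The only thing left to repair is that this uniform shift alters the total by $\alpha\|\boldsymbol{x}\|^2\big(\sum_{\mathcal{I},\mathcal{J}} f_{\mathcal{I},\mathcal{J}}(\boldsymbol{x})\big)$. The key point is a partition-of-unity identity: grouping the degree-$k$ multiplier slots by the monomial $\prod_s x_s^{m_s}$ they refine, each group sums to $\prod_s\big(x_s+(1-x_s)\big)^{m_s}=1$, so the degree-$k$ slots contribute the number of degree-$k$ monomials, namely $\binom{n+k-1}{k}$. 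Hence $\sum_{\mathcal{I},\mathcal{J}} f_{\mathcal{I},\mathcal{J}}(\boldsymbol{x}) = \sum_{k=0}^{d-2}\binom{n+k-1}{k}=:C$ is a constant, and the convexifying shifts contribute exactly $\alpha C\|\boldsymbol{x}\|^2$.

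Finally I would cancel this term without breaking convexity: subtracting $\alpha C\,x_i$ inside the quadratic attached to each degree-$1$ multiplier $x_i$ yields $\sum_i x_i\cdot(-\alpha C x_i) = -\alpha C\|\boldsymbol{x}\|^2$, which exactly offsets the shift, and because the correction is linear it leaves every Hessian unchanged, so all quadratics stay convex. As a consistency check, for $d=3$ this reproduces the coefficient $\alpha(1+n)$ of Theorem~\ref{thm:exist_degree3} and for $d=4$ the coefficient $\alpha\big(1+n+n(n+1)/2\big)$ of Theorem~\ref{thm:exist_degree4}. I expect the main obstacle to be precisely this last exact cancellation: it rests on the combinatorial identity above and on the bookkeeping guaranteeing that the base decomposition reaches every monomial of $p_d$, including pure powers such as $x_i^d$, which forces the multipliers to carry repeated indices (as already occurs at degree $4$). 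Once that bookkeeping is settled, convexity follows immediately from Gershgorin and the equality is a term-by-term verification.
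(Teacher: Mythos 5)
Your proposal follows the same three moves as the paper's proof---a base decomposition by coefficient matching with $\mathcal{J}=\emptyset$, a uniform Gershgorin shift $\alpha\|\boldsymbol{x}\|^2$ on every quadratic, and a linear correction to restore equality---so structurally it is the paper's argument. Where you go further is the cancellation step: the paper merely asserts that additional linear terms can absorb the extra contributions (deferring to Theorem~\ref{thm:exist_degree4}), whereas you isolate the identity $\sum_{\mathcal{I},\mathcal{J}} f_{\mathcal{I},\mathcal{J}}(\boldsymbol{x}) = \sum_{k=0}^{d-2}\binom{n+k-1}{k} =: C$ that makes a single correction $-\alpha C x_i$ suffice, and you verify it reproduces the constants $1+n$ and $1+n+n(n+1)/2$ of Theorems~\ref{thm:exist_degree3} and~\ref{thm:exist_degree4}; this is a genuinely useful addition. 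One caveat, which you flag but do not resolve: the partition-of-unity computation $\prod_s\bigl(x_s+(1-x_s)\bigr)^{m_s}=1$ counts slot assignments, so a multiplier carrying a repeated index, such as $x_i(1-x_i)$ refining $x_i^2$, must enter with the binomial multiplicity (weight $2$ at degree two); if each distinct pair $(\mathcal{I},\mathcal{J})$ is counted once, the diagonal group sums to $x_i^2+x_i(1-x_i)+(1-x_i)^2 \neq 1$ and the leftover is no longer a constant multiple of $\|\boldsymbol{x}\|^2$. This is repairable by scaling the convexifying shift of each quadratic by its multiplicity (a positive multiple of $\|\boldsymbol{x}\|^2$ is still convex), and the same bookkeeping point is present, and equally unaddressed, in the paper's own degree-$4$ construction.
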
 

\begin{proof}{Proof. }
We can match the different degree terms by appropriately selecting the index set $\mathcal{I}$, while setting $\mathcal{J} = \emptyset$. For the terms of degree $d$, of the form $cx_{\sigma(1)} \cdots x_{\sigma(d)}$, we take $\mathcal{I} = \{\sigma(1),..,\sigma(d-2)\}$ and then define the quadratic $q_{\mathcal{I},\mathcal{J}}(\boldsymbol{x}) = \boldsymbol{x}^T \boldsymbol{P}^{\mathcal{I},\mathcal{J}}\boldsymbol{x}$, where $\boldsymbol{P}^{\mathcal{I},\mathcal{J}} = \boldsymbol{0}$, and $\boldsymbol{P}^{\mathcal{I},\mathcal{J}}_{\sigma(d-1) \sigma(d)} = c$. Moreover, for the terms of degree $d-1$, of the form $cx_{\sigma(1)} x_{\sigma(2)}...x_{\sigma(d-1)}$, we take the index set $\mathcal{I} = \{\sigma(1),..,\sigma(d-3)\}$ and then define the quadratic $q_{\mathcal{I},\mathcal{J}}(\boldsymbol{x}) = \boldsymbol{x}^T \boldsymbol{P}^{\mathcal{I},\mathcal{J}}\boldsymbol{x}$, where $\boldsymbol{P}^{\mathcal{I},\mathcal{J}} = \boldsymbol{0}$ and $\boldsymbol{P}^{\mathcal{I},\mathcal{J}}_{\sigma(d-2) \sigma(d-1)} = c$. Similarly, we can match the terms of degree $d-2$ by taking index sets $\mathcal{I}$, such that $|\mathcal{I}| = d-4$ and repeat the process all the way to degree $4$, in which case we can define the remaining functions as in Theorem \ref{thm:exist_degree4}. Moreover, for each quadratic $q_{\mathcal{I},\mathcal{J}}(\boldsymbol{x})$, we define $\alpha^{\mathcal{I},\mathcal{J}} = \max_k \left\{ \sum_{l \neq k} |P_{kl}^{\mathcal{I},\mathcal{J}}| - P_{kk}^{\mathcal{I},\mathcal{J}} \right\}$. Then, we take $\alpha = \max \left(0, \max_{\mathcal{I},\mathcal{J}} \alpha^{\mathcal{I},\mathcal{J}} \right)$ and add the term $\alpha \|\boldsymbol{x}\|^2$ to each quadratic. By definition, the Hessian matrices of the quadratics are diagonally dominant, and therefore from the Gershgorin Theorem \cite{gershgorin1931uber}, it follows that they are positive semi-definite. Therefore, the quadratics $q_{\mathcal{I},\mathcal{J}}$ are convex. Further, we can add additional linear terms in the quadratics, without affecting convexity, in order to cancel out the extra terms arising from the addition of $\alpha\|\boldsymbol{x}\|^2$, see Theorem \ref{thm:exist_degree4}. As a result, we obtain that
\begin{align*}
   \sum_{\substack{\mathcal{I}, \mathcal{J} \\ |\mathcal{I} \cup \mathcal{J}| \leq d-2 \\ \mathcal{I} \cap \mathcal{J} = \emptyset}} f_{\mathcal{I},\mathcal{J}}(\boldsymbol{x}) q_{\mathcal{I},\mathcal{J}}(\boldsymbol{x})  \: = \: p_d(\boldsymbol{x}).
\end{align*} \hfill $\square$   
\end{proof}  

\subsection{Deriving the best SLC decomposition} 
Observe that the SLC decompositions proposed in Theorem \ref{thm:exists_degree_arb} are parametrized by the coefficients of the quadratics. Therefore, we can apply the methodology described for polynomials of degree 3 and 4 to derive the best SLC decomposition. First, we linearize products in the functions $f_{\mathcal{I},\mathcal{J}}(\boldsymbol{x})$ and obtain the function $\Tilde{f}_{\mathcal{I},\mathcal{J}}(\boldsymbol{x},\boldsymbol{U})$, that is linear in both $\boldsymbol{x}$ and $\boldsymbol{U}$. We multiply the constraints in the feasible region to link $\boldsymbol{x}$ with $\boldsymbol{U}$ and denote the new feasible region with $\overline{X}$. Let $\boldsymbol{Z}$ denote the coefficients of the quadratics $q_{\mathcal{I},\mathcal{J}}(\boldsymbol{x})$ and $\mathcal{Z}$ denote the constraints that need to satisfy.  We consider the problem 
\begin{align*}
    \min_{\boldsymbol{x},\boldsymbol{U} \in \overline{X}} \max_{\boldsymbol{Z} \in \mathcal{Z}}  \sum_{\substack{\mathcal{I}, \mathcal{J} \\ |\mathcal{I} \cup \mathcal{J}| \leq d-2 \\ \mathcal{I} \cap \mathcal{J} = \emptyset}} \Tilde{f}_{\mathcal{I},\mathcal{J}}(\boldsymbol{x},\boldsymbol{U}) q_{\mathcal{I},\mathcal{J}}(\boldsymbol{x}). 
\end{align*} 
We then apply the same methodology as in Theorem \ref{thm:reformulation_degree4}, that is, derive the dual of the inner maximization problem and reformulate it as a minimization problem. The size of the largest LMI will be $\mathcal{O}(n^2)$, similarly as in Theorem \ref{thm:reformulation_degree4}, but the total number of variables will be increased to $\mathcal{O}(n^d)$. We note that in case $\mathcal{X}$ contains only conic constraints, then the problem resulting from the best SLC decomposition will be a conic optimization problem, for which there exist very efficient solvers, such as MOSEK \cite{mosek101}.

Finally, we note that an alternative SLC decomposition can be obtained by defining the linear terms as $x_i, \: 1-x_i$ for $i \in [n]$, and the convex terms as polynomials of degree $d-1$, see Appendix B for the proof of existence of such an SLC decomposition. However, if we define the SLC decomposition in this way, then we are not able to find the best SLC decomposition, and as a result we would obtain loose bounds.
}

\section{Comparison with other methods for polynomial optimization}   
In this section, we illustrate the pros and cons of our approach compared to two other methods for polynomial optimization, those are the SOS approach and RLT for polynomial optimization.
\subsection{Sum of squares for polynomial optimization}
The SOS method has been proposed in order to obtain a lower bound on the global minimum of an arbitrary polynomial of degree $d$, and it can be formulated as follows 
\begin{equation}
    \begin{array}{cll}
        \displaystyle \max_{\boldsymbol{x} \in \mathcal{X}} &\quad \alpha \\
        {\rm s.t.} 	&\quad p_d(\boldsymbol{x}) \ge \alpha,
    \end{array} \label{eq:sostools_1}
\end{equation}  
where the set $\mathcal{X}$ contains linear or polynomial constraints. The main idea is to replace the constraint $p_d(\boldsymbol{x}) - \alpha \ge 0$, with the constraint that $p_d(\boldsymbol{x}) - \alpha$ is a sum of squares of polynomials, see \cite{parrilo2001minimizing}. Then \cite{parrilo2003semidefinite} showed that a sufficient condition to satisfy the latter is 
\begin{align*}
  \exists \: \boldsymbol{Q} \succeq \boldsymbol{0} \quad \text{such that} \quad  p_d(\boldsymbol{x}) - \alpha \: = \: \boldsymbol{z}(\boldsymbol{x})^T \boldsymbol{Q} \boldsymbol{z}(\boldsymbol{x}),
\end{align*} 
where $\boldsymbol{z}(\boldsymbol{x})$ denotes the vector of all monomials of degree up to $d/2$. For example, if $p_4(x) = x^4+x^2+x$, then  we have $z(x) = [1,x,x^2]$. In the general case of a $n$-dimensional polynomial of even degree $d$, the dimension of $\boldsymbol{z}(\boldsymbol{x})$ will be $ \binom{n+d/2}{d/2}$, see \cite{parrilo2001minimizing}. Therefore, when optimizing an arbitrary polynomial of even degree $d$, the SOS relaxation contains an LMI of dimension $\binom{n+d/2}{d/2} \times \binom{n+d/2}{d/2}$, and has $\mathcal{O}(n^{d})$ variables. Moreover, in order to converge to the optimum the SOS method might need to consider polynomials of higher degree in the SOS decomposition, therefore leading to LMIs of even larger dimension, see \cite{parrilo2003semidefinite}.

As we have seen in this paper, our proposed approach also results in a problem that involves LMIs. However, the largest LMI is of dimension $(n+1) \times (n+1)$. Observe that if the degree of the polynomial is greater than 4, then the size of the largest LMI, will still be $(n+1) \times (n+1)$, as we can linearize all products with new variables and still have an SLC objective, in which the convex functions are polynomials of degree 2. Moreover, our method results in $\mathcal{O}(n^{d-1})$ variables and $\mathcal{O}(n^d)$ parameters for the SLC representation. We note that one might also use the adversarial approach to obtain the best representation. 
 
Moreover, we note that not all non-negative polynomials can be written as SOS. The existence of a SOS decomposition for a non-negative polynomial holds when $n=2$ or $d=2$ and $n=3,d=4$, see \cite{reznick2000some}. Moreover, Hilbert proved that if $n \ge 4$ and $d \ge 4$, then there exist non-negative polynomials that cannot be written as SOS. {\color{black} Therefore, the SOS method yields an approximation, while in some cases, a certificate is obtained by retrieving the optimal solution $\boldsymbol{x}^*$, and evaluating the original polynomial at the latter. Finally, we note that our method can also handle non-polynomial constraints, contrary to the SOS method.}

\subsection{RLT for polynomial optimization}  
Another approach for polynomial optimization problems, is the Reformulation-Linearization-Technique (RLT), introduced in \cite{sa90} and extended to polynomial optimization problems in \cite{sherali2013}. In this case, a linear relaxation of Problem \eqref{eq:genericproblem} is obtained by linearizing products of variables with new variables, for example, $X_{122} = x_1 x_2^2$. Then, in order to obtain bounds on the new variables the linear constraints of the feasible region are multiplied with each other. As a result, we obtain $\mathcal{O}(n^d)$ variables and $\mathcal{O}(n^d)$ constraints, while the obtained bound can be loose. More importantly, it has been noted that RLT is a special case of the SOS method, see \cite{parrilo2003semidefinite}, and as a result it is outperformed by the latter.

More recently, \cite{dalkiran2016rlt} introduced enhancements of the RLT approach, that allowed them to solve problems with 16 variables for polynomials of degree 3 and 12 variables for polynomials of degree 4. As we illustrate in Section 6, with our approach we can solve problems with $40$ variables and degree 3, as well as $20$ variables and degree 4. Moreover, our method also applies to problems with non-polynomial constraints, while the RLT approach is restricted to problems with linear or polynomial constraints.

{\color{black} Finally, we note that we can easily make a variant of our method, that is provably better than RLT. Suppose our polynomial is degree 3. Then, we can add all degree 4 terms with 0 coefficient. In this case, our approach contains RLT, but finds much better approximations.}

\section{Numerical Experiments}  
In this section, we demonstrate numerically the value of the proposed framework. We consider the following feasible regions 
\begin{align*}
	\begin{array}{cll}
	\displaystyle \mathcal{X}_1 &= \displaystyle \{\boldsymbol{x} \in \mathbb{R}^{n_x} \mid 0 \le x_i \le 1, \ i \in [n_x] \}, \\
	\displaystyle \mathcal{X}_2 &= \displaystyle \left\{ \boldsymbol{x} \in \mathcal{X}_1 \mid p_3^2(\boldsymbol{x}) \le 0 \right\}, \\ 
    \displaystyle \mathcal{X}_2' &= \left\{ \boldsymbol{x} \in \mathcal{X}_1 \mid p_4^2(\boldsymbol{x}) \le 0 \right\}, \\
       \displaystyle \mathcal{X}_3 &= \displaystyle \left\{ \boldsymbol{x} \in \mathcal{X}_1 \mid \log\left( \sum_{i=1}^n \exp(x_i) \right) \le \alpha \right\},
	\end{array}
\end{align*}  
where $p_3^2(\boldsymbol{x}),p_4^2(\boldsymbol{x})$ are polynomials of degree 3 and 4, respectively. We use the same branch and bound scheme for our method as in \cite{bertsimas2023novel}. The numerical experiments are performed on an Intel i9 2.3GHz CPU core with 16.0GB of RAM. The computations are implemented in Julia 1.5.3 and the Julia package \texttt{JuMP.jl} version 0.21.6. All computations for our approach are conducted with MOSEK 9.3.18 \cite{mosek101}. Further, all computations for the SOS method, are conducted with the SOSTools software \cite{prajna2002introducing} implemented in the Julia package \texttt{SumOfSquares} version 0.4.3, and all computations for BARON are conducted with BARON version 20.10.16 \cite{sahinidis1996baron} implemented using the Python package \texttt{pyomo} version 6.4.1. We use the abbreviations ``Opt", ``Hyp", and ``Time" to represent the optimal value, the total number of hyperplanes generated during branch and bound, and the computation time in seconds, respectively. Moreover, we set the maximum time limit equal to 3600 seconds, hence if the computation time equals $3600^*$, the optimum cannot be found within 3600 seconds and all approaches return the best value they can obtain within 3600 seconds. For each value of the dimension $n$, the results are averaged over 10 randomly generated instances. The coefficients of the polynomials are randomly generated in the interval $[-5,5]$, with a density of $0.5$ for degree 3, and $0.2$ for degree 4. Finally, for the set $\mathcal{X}_3$ we take $\alpha=3$ for $n \in \{10,15\}$ and $\alpha=3.5$ for $n \in \{20,30\}$.

\subsection{Polynomials of degree 3}
We first consider the case of degree 3 polynomials. We consider the following problem:
\begin{equation}
    \begin{array}{cll}
        \displaystyle \min_{\boldsymbol{x}} &\quad p_3(\boldsymbol{x}) \\
        {\rm s.t.} 	&\quad \boldsymbol{x} \in \mathcal{X}.
    \end{array} \label{eq:poly3_numexp_1}
\end{equation}     
{\color{black} First, in Table \ref{tab:results_degree3_decomp}, we illustrate that the best SLC decomposition finds the optimum in many cases and gives much better bounds than a random SLC decomposition.} Moreover, in Table \ref{tab:results_degree3}, we compare our method with SOS and BARON.

\begin{table}[t]   
    \centering
    \begin{tabular}{lclllllllll}
\hline
	\multirow{2}{*}{$\mathcal{X}$} & & \multirow{2}{*}{$n_x$} & &  \multicolumn{3}{c}{Best-SLC}  &  & \multicolumn{3}{c}{Random-SLC} \\ 
		\cline{5-7} \cline{9-11}
    	& & &  & LB & UB & Time(s)  &  & LB & UB & Time(s) \\ \hline \hline
         & & $10$ & & -69.84 & -69.84 & 0.51 & & -628.46 & -69.39 & 0.13 \\
	\multirow{2}{*}{$\mathcal{X}_1$} & & $20$ & & -308.90 & -308.90 & 17.11 & & -10956.91 & -306.88 & 1.84 \\
	 & & $30$ & & -710.05 & -710.05 & 303.43 & & -81439.45 & -709.89 & 15.66 \\
	\hline 
	& & $10$ & & -62.28 & -53.33 & 1.07 & & -628.46 & -0.74 & 0.22 \\
	\multirow{2}{*}{$\mathcal{X}_2$} & & $20$ & & -304.70 & -283.92 & 51.22 & & -10956.91 & -9.47 & 4.91 \\ 
	& & $30$ & & -708.62 & -631.08 & 630.58 & & -81439.45 & -32.90 & 30.38 \\
\hline
    \end{tabular}
    \caption{Comparison of obtained bounds for a random versus the "best" SLC decomposition. \label{tab:results_degree3_decomp}}
\end{table}   

\begin{table}[H]   
    \centering 
    \begin{tabular}{lcllllllllllllllll}
    \hline 
    \multirow{2}{*}{$\mathcal{X}$} & & \multirow{2}{*}{$n$} & &  \multicolumn{3}{c}{Ours} &  & \multicolumn{2}{c}{BARON} & & \multicolumn{2}{c}{SOS} \\
		\cline{5-7} \cline{9-10} \cline{12-13} 
    	& & &  & Opt & Time & Hyp &  & Opt & Time & & Opt & Time \\ \hline \hline 
        \multirow{4}{*}{$\mathcal{X}_1$} & & $10$ & & -69.84 & 0.51 & 0 & & -69.84 & 44.74 & & -69.84 & 0.15 \\
        & & $20$ & & -308.90 & 17.37 & 0 & & -308.88 & $3600^{*}$ & & -308.90 & 2.89 \\
        & & $30$ & & -710.05 & 314.32 & 0 & & -710.05 & $3600^{*}$ & & - & - \\ 
        & & $40$ & & -1248.90 & 2964.22 & 0 & & -1246.49 & $3600^{*}$ & & - & - \\
	\hline 
        \multirow{3}{*}{$\mathcal{X}_2$} & & $10$ & & -61.28 & 4.29 & 1.7 & & -61.28 & 232.18 & & $-62.26^{\dag}$ & 0.34 \\
        & & $20$ & & -303.24 & 177.01 & 1.5 & & -303.24 & $3600^{*}$ & & $-304.69^{\dag}$ & 4.34 \\
        & & $30$ & & -707.99 & 1680.28 & 0.8 & & -707.99 & $3600^{*}$ & & - & - \\ 
        \hline 
        \multirow{3}{*}{$\mathcal{X}_3$} & & $10$ & & -64.14 & 3.16 & 1.1 & & -64.14 & 184.09 & &  &  \\
        & & $20$ & & -170.84 & 729.13 & 8.1 & & -170.38 & $3600^{*}$ & &  &  \\
        & & $30$ & & -17.85 & $3600^{*}$ & 4.5 & & -17.85 & $3600^{*}$ & &  &  \\ 
        \hline 
    \end{tabular} 
    \caption{Results for polynomials of degree 3. A "$-$" on the SOS method indicates that it ran out of memory and a $\dag$ indicates that a lower bound was returned without a certificate of optimality. \label{tab:results_degree3}}
\end{table} 

From Table \ref{tab:results_degree3}, we observe that all instances for $\mathcal{X}_1$ were directly solved at the root node. Moreover, for $\mathcal{X}_2$ and $\mathcal{X}_3$ the obtained bounds were still very good but some additional branch and bound was needed in order to solve the problem to optimality. For $\mathcal{X}_1$, we notice that for $n=10$ and $n=20$ SOS was able to find the optimum very fast. However, we notice a significant improvement from our method over BARON and SOS in the larger scaler problems. More precisely, our approach could efficiently solve problems for $n=30$ and $n=40$. On the other hand, BARON was able to find the optimum in all instances for $n=30$, but could not prove optimality within one hour, while for $n=40$ it was not able to find the optimum within one hour for some instances and returned a greater value. Moreover, SOS ran out of memory for both $n=30$ and $n=40$.  For $ \mathcal{X}_2$, SOS finds a solution that is a lower bound for the optimum, in which case our method achieves the best performance. Finally, for $\mathcal{X}_3$, SOS is no longer applicable, while our method is able to solve the problems faster than BARON for $n=10$ and $n=20$. We notice that BARON was not able to find the optimum in some cases for $n=20$ and returned a larger value.

\subsection{Polynomials of degree 4}  
In this section, we apply our method to problems with polynomials of degree 4. We consider the following problem 
\begin{equation}
    \begin{array}{cll}
        \displaystyle \min_{\boldsymbol{x}} &\quad p_4(\boldsymbol{x}) \\
        {\rm s.t.} 	&\quad \boldsymbol{x} \in \mathcal{X}.
    \end{array} \label{eq:poly4_numexp}
\end{equation}     
The results for our approach in comparison with BARON and SOS, are illustrated in Table \ref{tab:results_degree4}. 

\begin{table}[H]   
    \centering 
    \begin{tabular}{lcllllllllllllllll}
    \hline 
    \multirow{2}{*}{$\mathcal{X}$} & & \multirow{2}{*}{$n$} & &  \multicolumn{3}{c}{Ours} &  & \multicolumn{2}{c}{BARON} & & \multicolumn{2}{c}{SOS} \\
		\cline{5-7} \cline{9-10} \cline{12-13} 
    	& & &  & Opt  & Time & Hyp &  & Opt & Time & & Opt & Time \\ \hline \hline 
        \multirow{3}{*}{$\mathcal{X}_1$} & & $10$ & & -144.29 & 11.17 & 0 & & -143.58 & $3600^{*}$ & & -144.29 & 9.16 \\
        & & $15$ & & -441.26 & 222.75 & 0 & & -435.71 & $3600^{*}$ & & -441.26 & 416.99 \\
        & & $20$ & & -1323.71 & 2001.45 & 0 & & 1319.33 & $3600^{*}$ & & - & - \\ 
	\hline 
        \multirow{2}{*}{$\mathcal{X}_2'$} & & $10$ & & -135.42 & 83.73 & 1.1 & & -135.42 & $3600^{*}$ & & $-138.33^{\dag}$ & 10.92 \\
        & & $15$ & & -423.67 & 1905.88 & 1.2 & & -423.67 & $3600^{*}$ & & $-430.98^{\dag}$ & 501.64 \\
        \hline 
        \multirow{2}{*}{$\mathcal{X}_3$} & & $10$ & & -136.33 & 18.04 & 0.2 & & -136.33 & $3600^{*}$  & &  &  \\
        & & $15$ & & -68.47 & 1707.86 & 3.1 & & -68.47 & $3600^{*}$ & &  &  \\
        \hline 
    \end{tabular} 
    \caption{Results for polynomials of degree 4. A "$-$" on the SOS method indicates that it ran out of memory and a $\dag$ indicates that a lower bound was returned without a certificate of optimality. \label{tab:results_degree4}}
\end{table}   

From Table \ref{tab:results_degree4}, we observe that our method achieves the best performance in all cases for $n=20$ and $n=15$. Moreover, for $n=10$ on $\mathcal{X}_1$ SOS is faster, while on both $\mathcal{X}_2'$ and $\mathcal{X}_3$ our method achieves the best performance. Finally, we note that on $\mathcal{X}_2'$ SOS derives a lower bound that is not equal to the optimum.

\section{Conclusions}  
In this paper, we developed a new approach for polynomial optimization, by deriving SLC decompositions for polynomials. Further, out of the infinitely many possible decompositions, we derived the one that results in the tightest lower bound for the original problem. The resulting problem is a SDP, in which the dimension of the largest LMI is reduced compared to the SOS method. As a result, our approach often outperforms state-of-the-art methods for polynomial optimization, such as BARON and SOS. Moreover, in the numerical experiments we showed that with our method, we can efficiently solve polynomial optimization problems with 40 variables and degree 3, as well as 20 variables and degree 4.

\bibliography{Masterbib}
\bibliographystyle{plainnat}

\begin{appendices} 
\section{Gershgorin formulation for degree 3} 
We use extra variables $\boldsymbol{T}^i,\boldsymbol{S}^i, \boldsymbol{T}'$ to linearize $\left \lvert \boldsymbol{P}^i \right \rvert, \left \lvert \boldsymbol{Q}^i \right \rvert, \left \lvert \boldsymbol{P}' \right \rvert$, respectively. Let $\mathcal{Z}_2$ denote the set of constraints that the coefficients $\boldsymbol{Z}$ need to satisfy in this case, which is defined as follows
\begin{align*}
    \mathcal{Z}_2 = \left\{ \begin{array}{lr} 
     \left( \boldsymbol{P}^i, \boldsymbol{Q}^i, \boldsymbol{P}', \boldsymbol{r}^i, \boldsymbol{f}^i, \boldsymbol{r}', w^i, g^i, w' \right) \in \mathcal{Z}_1, \\ 
      \sum_{i=1}^n \left \langle \Hat{\boldsymbol{A}}_j^i, \boldsymbol{P}^i \right \rangle + \left \langle \Hat{\boldsymbol{B}}_j^i, \boldsymbol{Q}^i \right \rangle + \left \langle \boldsymbol{H}_j^i, \boldsymbol{T}^i \right \rangle + \left \langle \boldsymbol{G}_j^i, \boldsymbol{S}^i \right \rangle \\ 
      \quad \quad \quad + \left( \Hat{\boldsymbol{c}}_j^i \right)^T \boldsymbol{r}^i + \left( \Hat{\boldsymbol{d}}_j^i \right)^T \boldsymbol{f}^i + \Hat{\mu}_j^i w^i + \Hat{\nu}_j^i g^i \\ 
      \quad + \left \langle \boldsymbol{\Psi}_j, \boldsymbol{T}' \right \rangle + \left \langle \Hat{\boldsymbol{\Xi}}_j, \boldsymbol{P}' \right \rangle + \Hat{\boldsymbol{\omega}}^T \boldsymbol{r}' + \hat{\gamma}w'  \le s_j^2, \: j \in [m_2].
    \end{array} \right\},
\end{align*} 
We have the following result for $\mathcal{Z}_2$. 
\begin{theorem}
\label{thm:gersh_degree3}
    Assuming the set $\mathcal{Z}_2$, then for a given $(\boldsymbol{x},\boldsymbol{U})$, the tightest lower bound for $g_3(\boldsymbol{x},\boldsymbol{U},\boldsymbol{Z})$ corresponds to the solution of the following problem

\begin{subequations}
			\begin{align}
				\displaystyle \min_{\boldsymbol{x}, \boldsymbol{U}, \boldsymbol{Y}, \boldsymbol{R}, \boldsymbol{\lambda}, \boldsymbol{\theta}} &\quad \sum_{j=1}^{m_2} \theta_j s_j^2 - \sum_{j=1}^{m} \lambda_j s_j \nonumber \\
				{\rm s.t.} &\quad \boldsymbol{Y}^i + \sum_{j=1}^{m} \lambda_j \boldsymbol{A}_j^i - \sum_{j=1}^{m_2} \theta_j \Hat{\boldsymbol{A}}_j^i = \boldsymbol{0}, \quad i \in [n], \label{eq:gersh_degree3_a} \\ 
                &\quad \boldsymbol{R}^i + \sum_{j=1}^{m} \lambda_j \boldsymbol{B}_j^i - \sum_{j=1}^{m_2} \theta_j \Hat{\boldsymbol{B}}_j^i = \boldsymbol{0}, \quad i \in [n],,  \label{eq:eq:gersh_degree3_b} \\ 
                &\quad \boldsymbol{Y}^{n+1} + \sum_{j=1}^{m} \lambda_j \boldsymbol{\Xi}_j - \sum_{j=1}^{m_2} \theta_j \Hat{\boldsymbol{\Xi}}_j = \boldsymbol{0}, \label{eq:eq:gersh_degree3_c} \\ 
                 &\quad \boldsymbol{U}_i + \sum_{j=1}^{m} \lambda_j \boldsymbol{c}_j^i - \sum_{j=1}^{m_2} \theta_j \Hat{\boldsymbol{c}}_j^i = \boldsymbol{0}, \quad i \in [n], \label{eq:eq:gersh_degree3_d} \\ 
                &\quad \boldsymbol{x}-\boldsymbol{U}_i + \sum_{j=1}^{m} \lambda_j \boldsymbol{d}_j^i - \sum_{j=1}^{m_2} \theta_j \Hat{\boldsymbol{d}}_j^i = \boldsymbol{0}, \quad i \in [n], \label{eq:eq:gersh_degree3_e} \\  
                &\quad \boldsymbol{x} + \sum_{j=1}^{m} \lambda_j \boldsymbol{\omega}_j - \sum_{j=1}^{m_2} \theta_j \Hat{\boldsymbol{\omega}}_j = \boldsymbol{0}, \label{eq:eq:gersh_degree3_f} \\ 
                &\quad x_i + \sum_{j=1}^{m} \lambda_j \mu_j^i - \sum_{j=1}^{m_2} \theta_j \Hat{\mu}_j^i = 0, \quad i \in [n], \label{eq:eq:gersh_degree3_g} \\ 
                &\quad 1-x_i + \sum_{j=1}^{m} \lambda_j \nu_j^i - \sum_{j=1}^{m_2} \theta_j \Hat{\nu}_j^i = 0, \quad i \in [n], \label{eq:gersh_degree3_h} \\ 
                &\quad 1 + \sum_{j=1}^{m} \lambda_j \gamma_j - \sum_{j=1}^{m_2} \theta_j \Hat{\gamma}_j = 0, \label{eq:gersh_degree3_i} \\ 
                &\quad \sum_{j=1}^{m_2} \theta_j \boldsymbol{H}_j^i = \boldsymbol{0}, \quad i \in [n], \label{eq:gersh_degree3_j} \\ 
                &\quad \sum_{j=1}^{m_2} \theta_j \boldsymbol{G}_j^i = \boldsymbol{0}, \quad i \in [n], \label{eq:gersh_degree3_k} \\ 
                &\quad \sum_{j=1}^{m_2} \theta_j \boldsymbol{\Psi}_j = \boldsymbol{0}, \label{eq:gersh_degree3_l} \\
                &\quad \begin{pmatrix}
                        \boldsymbol{Y}^i & \boldsymbol{U}_i \\
                        \boldsymbol{U}_i^T & x_i
                   \end{pmatrix} \succeq \boldsymbol{0}, \quad i \in [n], \label{eq:gersh_degree3_m} \\ 
               &\quad \begin{pmatrix}
                        \boldsymbol{R}^i & \boldsymbol{x}-\boldsymbol{U}_i \\
                        \left(\boldsymbol{x}-\boldsymbol{U}_i\right)^T & 1-x_i
                   \end{pmatrix} \succeq \boldsymbol{0}, \quad i \in [n], \label{eq:gersh_degree3_n} \\ 
             &\quad \begin{pmatrix}
                        \boldsymbol{Y}^{n+1} & \boldsymbol{x} \\
                        \boldsymbol{x}^T & 1
                   \end{pmatrix} \succeq \boldsymbol{0}, \label{eq:gersh_degree3_o} \\
              &\quad \left( \boldsymbol{x}, \boldsymbol{U} \right) \in \overline{\mathcal{X}}, \boldsymbol{\theta} \ge \boldsymbol{0}. \label{eq:gersh_degree3_p}
			\end{align}
\end{subequations}
\end{theorem}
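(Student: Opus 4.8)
\noindent\emph{Proof plan.} The plan is to mirror the duality argument behind Theorem \ref{thm:reformulation_degree3}, the only structural change being that the uncertainty set is now the polyhedron $\mathcal{Z}_2$ in place of the spectrahedron $\mathcal{Z}_1$. First I would fix $(\boldsymbol{x},\boldsymbol{U})$ and observe that $g_3(\boldsymbol{x},\boldsymbol{U},\boldsymbol{Z})$ is linear in the decision variables $\boldsymbol{Z}$ (now augmented with the auxiliary matrices $\boldsymbol{T}^i,\boldsymbol{S}^i,\boldsymbol{T}'$ that linearize $|\boldsymbol{P}^i|,|\boldsymbol{Q}^i|,|\boldsymbol{P}'|$), while $\mathcal{Z}_2$ is described purely by the $m$ coefficient-matching equalities inherited from $\mathcal{Z}_1$ (right-hand sides $s_j$) and the $m_2$ diagonal-dominance inequalities (right-hand sides $s_j^2$). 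Hence the inner problem $\max_{\boldsymbol{Z}\in\mathcal{Z}_2} g_3(\boldsymbol{x},\boldsymbol{U},\boldsymbol{Z})$ is a linear program in $\boldsymbol{Z}$, and I would replace it by its dual and re-insert the result into the outer minimization over $(\boldsymbol{x},\boldsymbol{U})\in\overline{\mathcal{X}}$.

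Next I would assemble the dual, assigning free multipliers $\lambda_j$ to the $m$ equalities and nonnegative multipliers $\theta_j\ge 0$ to the $m_2$ inequalities, and writing one dual condition per primal variable by matching its objective coefficient against its appearances in the equality and inequality blocks. The variables $\boldsymbol{r}^i,\boldsymbol{f}^i,\boldsymbol{r}',w^i,g^i,w'$ enter $g_3$ with the affine coefficients $\boldsymbol{U}_i,\boldsymbol{x}-\boldsymbol{U}_i,\boldsymbol{x},x_i,1-x_i,1$, and matching these reproduces constraints \eqref{eq:eq:gersh_degree3_d}--\eqref{eq:gersh_degree3_i}. The auxiliary variables $\boldsymbol{T}^i,\boldsymbol{S}^i,\boldsymbol{T}'$ carry zero objective coefficient, so their dual conditions are the homogeneous equalities \eqref{eq:gersh_degree3_j}--\eqref{eq:gersh_degree3_l}, namely $\sum_j\theta_j\boldsymbol{H}_j^i=\boldsymbol{0}$, $\sum_j\theta_j\boldsymbol{G}_j^i=\boldsymbol{0}$ and $\sum_j\theta_j\boldsymbol{\Psi}_j=\boldsymbol{0}$, and the dual objective collects the right-hand sides as $\sum_{j}\theta_j s_j^2-\sum_j\lambda_j s_j$.

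The one genuinely nontrivial block concerns $\boldsymbol{P}^i,\boldsymbol{Q}^i,\boldsymbol{P}'$, whose coefficients in $g_3$ are the quadratic-over-linear (perspective) matrices $\frac{1}{x_i}\boldsymbol{U}_i\boldsymbol{U}_i^T$, $\frac{1}{1-x_i}(\boldsymbol{x}-\boldsymbol{U}_i)(\boldsymbol{x}-\boldsymbol{U}_i)^T$ and $\boldsymbol{x}\boldsymbol{x}^T$. Here I would invoke exactly the device used in Theorem \ref{thm:reformulation_degree3} (Theorem~13.1 in \cite{bertsimas22}): introduce the epigraphic matrices $\boldsymbol{Y}^i,\boldsymbol{R}^i,\boldsymbol{Y}^{n+1}$ dominating these rank-one matrices through the Schur-complement LMIs \eqref{eq:gersh_degree3_m}--\eqref{eq:gersh_degree3_o}, so that the dual stationarity conditions for $\boldsymbol{P}^i,\boldsymbol{Q}^i,\boldsymbol{P}'$ become \eqref{eq:gersh_degree3_a}--\eqref{eq:eq:gersh_degree3_c}. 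The key point distinguishing this from Theorem \ref{thm:reformulation_degree3} is that $\boldsymbol{P}^i,\boldsymbol{Q}^i,\boldsymbol{P}'$ are no longer sign-constrained (their convexity is enforced only indirectly, through the diagonal-dominance inequalities): as free primal variables their dual conditions are \emph{equalities}, which is why \eqref{eq:gersh_degree3_a}--\eqref{eq:eq:gersh_degree3_c} are matrix equalities rather than the $\preceq\boldsymbol{0}$ inequalities of Theorem \ref{thm:reformulation_degree3}, and the diagonal-dominance multipliers contribute the extra terms $-\sum_j\theta_j\Hat{\boldsymbol{A}}_j^i$, $-\sum_j\theta_j\Hat{\boldsymbol{B}}_j^i$, $-\sum_j\theta_j\Hat{\boldsymbol{\Xi}}_j$. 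Appending $(\boldsymbol{x},\boldsymbol{U})\in\overline{\mathcal{X}}$ and $\boldsymbol{\theta}\ge\boldsymbol{0}$ then yields the stated program.

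The step I expect to be the main obstacle is justifying that this passage is exact rather than merely a valid bound: one must establish strong duality for the inner linear program and, crucially, that relaxing the dual equalities on $\boldsymbol{P}^i,\boldsymbol{Q}^i,\boldsymbol{P}'$ to the Schur-complement LMIs \eqref{eq:gersh_degree3_m}--\eqref{eq:gersh_degree3_o} does not loosen the optimum, i.e. that at an optimal solution the epigraphic variables are driven down to the perspective matrices. Both facts are delivered by Theorem~13.1 in \cite{bertsimas22}, so the real work is to verify that its regularity hypotheses (feasibility and boundedness, or a Slater-type condition) hold for the polyhedral set $\mathcal{Z}_2$; once this is in place, everything else reduces to the sign and index bookkeeping sketched above.
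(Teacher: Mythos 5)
Your proposal is correct and follows essentially the same route as the paper's own (much terser) proof: dualize the inner maximization over $\mathcal{Z}_2$ with free multipliers $\boldsymbol{\lambda}$ for the coefficient-matching equalities and $\boldsymbol{\theta}\ge\boldsymbol{0}$ for the diagonal-dominance inequalities, and invoke Theorem~13.1 of \cite{bertsimas22} to handle the perspective terms via the epigraphic matrices $\boldsymbol{Y}^i,\boldsymbol{R}^i,\boldsymbol{Y}^{n+1}$ and their Schur-complement LMIs. Your additional observations --- that the now-free $\boldsymbol{P}^i,\boldsymbol{Q}^i,\boldsymbol{P}'$ yield dual equalities rather than the $\preceq\boldsymbol{0}$ conditions of Theorem \ref{thm:reformulation_degree3}, and that the zero-objective auxiliaries $\boldsymbol{T}^i,\boldsymbol{S}^i,\boldsymbol{T}'$ produce the homogeneous constraints \eqref{eq:gersh_degree3_j}--\eqref{eq:gersh_degree3_l} --- are consistent with the stated formulation and simply make explicit the bookkeeping the paper leaves implicit.
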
 

\begin{proof}{Proof. }
From Theorem 13.1 in \cite{bertsimas22}, and leveraging duality, we have that the following problem  
\begin{align*}
    \max_{\boldsymbol{Z} \in \mathcal{Z}_2} g_3(\boldsymbol{x},\boldsymbol{U},\boldsymbol{Z}).
\end{align*}  
is equivalent to the objective of the Theorem.
Further, the extra variables $\boldsymbol{Y}^i, \boldsymbol{R}^i$ need to satisfy the additional LMIs 
\begin{align*}
    \begin{pmatrix}
                \boldsymbol{Y}^i & \boldsymbol{U}_i \\
                \boldsymbol{U}_i^T & x_i
           \end{pmatrix} \succeq \boldsymbol{0}, \quad  \begin{pmatrix}
                \boldsymbol{R}^i & \boldsymbol{x}-\boldsymbol{U}_i \\
                \left(\boldsymbol{x}-\boldsymbol{U}_i\right)^T & 1-x_i
           \end{pmatrix} \succeq \boldsymbol{0}.
\end{align*} \hfill $\square$
\end{proof}

\section{Existence of first type of SLC decomposition for arbitrary degree}
In this section, we prove the existence of an SLC decomposition for an arbitrary polynomial of degree $d$, denoted as $p_d$, where the linear terms are defined by $x_i$ and $1-x_i$, and the convex terms are polynomials of degree $d-1$. 
\begin{theorem} 
\label{thm:arb_degree_first}
Every polynomial of degree $d$, denoted as $p_d(\boldsymbol{x})$, can be written as 
\begin{align*}
    p_d(\boldsymbol{x}) = \sum_{i=1}^n x_i p_{d-1}^i(\boldsymbol{x}) + \sum_{i=1}^n \left(1 - x_i \right) q_{d-1}^i(\boldsymbol{x}),
\end{align*}  
where $p_{d-1}^i(\boldsymbol{x}), q_{d-1}^i(\boldsymbol{x})$ are convex polynomials of degree $d-1$. 
\end{theorem}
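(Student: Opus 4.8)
The plan is to mirror the two-step structure of the proof of Theorem \ref{thm:exist_degree3}: first produce \emph{some} (not necessarily convex) decomposition of the required form by matching coefficients, and then convexify the degree-$(d-1)$ factors by adding a strongly convex quadratic together with a compensating linear correction, chosen so that the overall identity is preserved. Throughout I would work under the standing assumption $\mathcal{X} \subseteq [0,1]^n$ and interpret convexity of the $p_{d-1}^i, q_{d-1}^i$ as convexity on the box $[0,1]^n$. This restriction is unavoidable: for $d \ge 4$ a degree-$(d-1)$ polynomial is in general not globally convex, and never so when $d$ is even, since its leading homogeneous part then has odd degree.

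For the matching step I would observe that every monomial of $p_d$ of degree at least one is divisible by some variable $x_j$, so writing it as $x_j$ times a monomial of degree at most $d-1$ and collecting terms yields polynomials $\tilde p_{d-1}^i$ of degree at most $d-1$ whose combination $\sum_i x_i \tilde p_{d-1}^i$ equals the non-constant part of $p_d$. The constant term $c^0$ is the only term not of this form, and I would absorb it via the identity $1 = \tfrac1n\sum_i\left(x_i + (1-x_i)\right)$, i.e.\ by placing the constant $c^0/n$ into each of the $2n$ factors. This produces an initial identity $p_d(\boldsymbol{x}) = \sum_{i=1}^n x_i \tilde p_{d-1}^i(\boldsymbol{x}) + \sum_{i=1}^n (1-x_i)\tilde q_{d-1}^i(\boldsymbol{x})$ with factors of degree at most $d-1$ that are not yet convex. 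This step is routine and I would not dwell on it.

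The heart of the argument is the convexification. I would set $p_{d-1}^i = \tilde p_{d-1}^i + \alpha\|\boldsymbol{x}\|^2 - n\alpha x_i$ and $q_{d-1}^i = \tilde q_{d-1}^i + \alpha\|\boldsymbol{x}\|^2$ for a scalar $\alpha > 0$ to be fixed. The key computation is that the cubic contributions of the $\alpha\|\boldsymbol{x}\|^2$ additions coming from the $x_i$ group and from the $(1-x_i)$ group cancel against each other, leaving exactly $n\alpha\|\boldsymbol{x}\|^2$, which is in turn cancelled by $\sum_i x_i(-n\alpha x_i) = -n\alpha\|\boldsymbol{x}\|^2$; hence the identity $\sum_i x_i p_{d-1}^i + \sum_i(1-x_i)q_{d-1}^i = p_d$ is preserved. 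Since the correction $-n\alpha x_i$ is linear it does not affect Hessians, so it remains to choose $\alpha$ large enough that each $\tilde p_{d-1}^i + \alpha\|\boldsymbol{x}\|^2$ and $\tilde q_{d-1}^i + \alpha\|\boldsymbol{x}\|^2$ is convex on $[0,1]^n$. Because each Hessian $\nabla^2 \tilde p_{d-1}^i(\boldsymbol{x})$ is a matrix of polynomials, it is bounded on the compact box, say $\nabla^2\tilde p_{d-1}^i(\boldsymbol{x}) \succeq -L\boldsymbol{I}$ there; then $\nabla^2(\tilde p_{d-1}^i + \alpha\|\boldsymbol{x}\|^2) = \nabla^2\tilde p_{d-1}^i + 2\alpha\boldsymbol{I} \succeq \boldsymbol{0}$ on the box as soon as $2\alpha \ge L$, and analogously for the $\tilde q$ factors. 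Taking $\alpha$ to be the maximum of the required thresholds over all $2n$ factors completes the construction.

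The main obstacle, and the point I would be most careful about, is precisely this convexity step: unlike the degree-3 case, where the Hessians are constant and convexity follows globally from Gershgorin, here the Hessians vary with $\boldsymbol{x}$ and global convexity is impossible, so one must restrict to the compact box and invoke boundedness of the Hessian there. The remaining delicate bookkeeping is to confirm that a single scalar $\alpha$ can be chosen uniformly across all factors and that the linear correction $-n\alpha x_i$ (which itself depends on $\alpha$) is genuinely of degree one, so that it restores the exact identity while leaving convexity intact.
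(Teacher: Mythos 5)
Your proposal is correct and follows essentially the same route as the paper's proof: a routine coefficient-matching step, followed by adding $\alpha\|\boldsymbol{x}\|^2$ to every factor with the compensating linear term $-\alpha n x_i$ on the $x_i$-factors, and choosing $\alpha$ large enough by bounding the Hessians on the box $[0,1]^n$. The only (immaterial) difference is that you obtain $\alpha$ from a compactness bound $\nabla^2\tilde p_{d-1}^i \succeq -L\boldsymbol{I}$ while the paper derives an explicit threshold from sums of absolute values of coefficients plus Gershgorin; your explicit remark that convexity can only hold on the box, not globally, is a point the paper leaves implicit.
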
 

\begin{proof}{Proof. } 
We can easily construct polynomials $p_{d-1}^i, q_{d-1}^i$ of degree $d-1$, such that 
\begin{align*}
    p_d(\boldsymbol{x}) = \sum_{i=1}^n x_i p_{d-1}^i(\boldsymbol{x}) + \sum_{i=1}^n (1-x_i) q_{d-1}^i(\boldsymbol{x}),
\end{align*} 
for any $\boldsymbol{x} \in [0,1]^n$. Let $H(\boldsymbol{x})$ denote the Hessian matrix of $p_{d-1}^i(\boldsymbol{x})$. We add the value $\alpha$ to the diagonal in order to make the Hessian matrix diagonally dominant, that is,  
\begin{align*}
    H_{ii}(\boldsymbol{x}) + \alpha \ge \sum_{j \neq i} |H_{ij}(\boldsymbol{x})|, \ \forall i, \: \forall \boldsymbol{x}.
\end{align*}
Observe that the $(i,j)$-th entry of $H(\boldsymbol{x})$ is a polynomial of degree $n-3$, that is,
\begin{align*}
    H_{ij}(\boldsymbol{x}) = \sum_{k=1}^{n-3} \sum_{i_1,..,i_k} c_{i_1,..,i_k}^{ij, k} x_{i_1}..x_{i_k},
\end{align*}  
where $c_{i_1,..,i_k}^{ij, k}$ denote coefficients of product terms of degree $k$, that involve $x_i x_j$. It follows that 
\begin{align*}
    |H_{ij}| \le \sum_{k=1}^{n-3} \sum_{i_1,..,i_k} \left \lvert c_{i_1,..,i_k}^{ij, k} x_{i_1}..x_{i_k} \right \rvert \le \sum_{k=1}^{n-3} \sum_{i_1,..,i_k} \left \lvert c_{i_1,..,i_k}^{ij, k} \right \rvert, 
\end{align*}
since $\left \lvert x_{i_1}..x_{i_k} \right \rvert \le 1$, for all $k$. Similarly, we obtain that 
\begin{align*}
    H_{ii} = \sum_{k=1}^{n-3} \sum_{i_1,..,i_k} c_{i_1,..,i_k}^{ii, k} x_{i_1}..x_{i_k} \ge -\sum_{k=1}^{n-3} \sum_{i_1,..,i_k} \left \lvert c_{i_1,..,i_k}^{ii, k} \right \rvert,
\end{align*}  
where $c_{i_1,..,i_k}^{ii, k}$ denote coefficients of product terms of degree $k$ that involve $x_i^2$. Therefore, to ensure that $H(\boldsymbol{x})$ is diagonally dominant, it suffices to have 
\begin{align*}
    \alpha \ge \sum_{j \neq i} \sum_{k=1}^{n-3} \sum_{i_1,..,i_k} \left \lvert c_{i_1,..,i_k}^{ij, k} \right \rvert + \sum_{k=1}^{n-3} \sum_{i_1,..,i_k} \left \lvert c_{i_1,..,i_k}^{ii, k} \right \rvert.
\end{align*}
Therefore, by adding a term $\alpha_p^i$ in the diagonal, such that 
\begin{align*}
    \alpha_p^i \ge \max_l \left\{ \sum_{j \neq l} \sum_{k=1}^{n-3} \sum_{i_1,..,i_k} \left \lvert c_{i_1,..,i_k}^{lj, k} \right \rvert + \sum_{k=1}^{n-3} \sum_{i_1,..,i_k} \left \lvert c_{i_1,..,i_k}^{ll, k} \right \rvert \right\},
\end{align*}
we can make the Hessian matrix of $p_{d-1}^i(\boldsymbol{x})$ diagonally dominant. Similarly, we define $\alpha_q^i$, and then we take $\alpha = \max_i \max\{\alpha_p^i, \alpha_q^i\}$. Consider the polynomials
\begin{align*}
    & \overline{p}_{d-1}^i(\boldsymbol{x}) = p_{d-1}^i(\boldsymbol{x}) + \alpha \|\boldsymbol{x}\|^2 - \alpha n x_i  \\
    & \overline{q}_{d-1}^i(\boldsymbol{x}) = q_{d-1}^i(\boldsymbol{x}) + \alpha \|\boldsymbol{x}\|^2,
\end{align*}     
that are both convex. Further, they satisfy
\begin{align*}
    \sum_{i=1}^n x_i\overline{p}_{d-1}^i(\boldsymbol{x}) + \sum_{i=1}^n (1-x_i) \overline{q}_{d-1}^i(\boldsymbol{x}) &= \sum_{i=1}^n x_ip_{d-1}^i(\boldsymbol{x}) + \sum_{i=1}^n (1-x_i) q_{d-1}^i(\boldsymbol{x}) \\ 
    &= p_d(\boldsymbol{x}).
\end{align*} \hfill $\square$
\end{proof} 

\end{appendices}

\end{document}